\documentclass[a4paper,12pt]{article}
\usepackage{amsmath, amssymb, amsfonts, amsthm,tikz-cd}
\usepackage{mathtools,xcolor}
\usepackage{hyperref}
\usepackage{longtable}
\newtheorem{thm}{Theorem}[section]
\newtheorem{lem}[thm]{Lemma}
\newtheorem{lema}[thm]{Lemma}
\newtheorem{prop}[thm]{Proposition}
\theoremstyle{definition}
\newtheorem{dfn}[thm]{Definition}

\theoremstyle{plain}
\newtheorem{cor}[thm]{Corollary}
\numberwithin{equation}{section}
\usepackage[a4paper, top = 1.2in, bottom = 1.2in, left = 0.8in, right = 0.8in]{geometry}
\numberwithin{equation}{section}

\newcommand{\N}{\mathbb{N}}
\newcommand{\Q}{\mathbb{Q}}

\newcommand{\Z}{\mathbb{Z}}

\newcommand{\Hom}{\mathrm{Hom}}

\def\1{1\!\!1}

\newcommand{\psmat}[4]{\bigl( \begin{smallmatrix} #1 & #2 \\ #3 & #4 \end{smallmatrix} \bigr)}

\title{Infinitely many cubic points on $X_0(N)/\langle w_d\rangle$ with $N$ square-free}

\author{ \ Francesc Bars\footnote{First author is supported by PID2020-116542GB-I00}, Tarun  Dalal }

\begin{document}
\maketitle

\begin{abstract}
We determine all modular curves $X_0(N)/\langle w_d\rangle$ that admit infinitely
many cubic points over the rational field $\mathbb{Q}$, when $N$ is square-free.
\end{abstract}

\section{Introduction}
A non-singular smooth curve $C$ over a number field $K$ of genus
${g_C>1}$ always has a finite set of $K$-rational points $C(K)$ by a
celebrated result of Faltings (here we fix once and for all
$\overline{K}$, an algebraic closure of $K$). We consider the set of
all points of degree at most $n$ for $C$ by
$\Gamma_n(C,K)=\cup_{[L:K]\leq n}C(L)$ and exact degree $n$ by
$\Gamma_n'(C,K)=\cup_{[L:K]=n}C(L)$, where $L\subseteq \overline{K}$
runs over the finite extensions of $K$. A point $P\in C$ is said to
be a point of degree $n$ over $K$  if $[K(P):K]=n.$

The set $\Gamma_n(C,M)$ is infinite for a certain $M/K$ finite
extension if $C$ admits a degree at most $n$ map, {{all
defined over $M$}}, to a projective line or an elliptic curve
{ with positive $M$-rank}. The converse is true for
$n=2$ \cite{HaSi91}, $n=3$ \cite{AH91} and $n=4$ under certain
restrictions \cite{AH91}\cite{DF93}. If we fix the number field $M$
in the above results (i.e. an arithmetic statement for
$\Gamma_n(C,M)$ with $M$ fix), {{we need a precise
understanding over $M$}} of the set $W_n(C)=\{v\in
Pic^n(C)|h^0(C,\mathcal{L}_v)>0\}$ where $Pic^n$ is the usual
$n$-Picard group and $\mathcal{L}_v$ the line bundle of degree $n$
on $C$ associated to $v$. {{If $W_n(C)$} contains no
translates of abelian subvarieties with positive $M$-rank of
$Pic^n(C)$ then $\Gamma_n'(C,M)$ is finite, (under the assumption
that $C$ admits no maps of degree at most $n$ to a projective line
over $M$)}.

For $n=2$ the arithmetic statement {{for
$\Gamma_n(C,K)$}} follows from \cite{AH91}, (for a sketch of the
proof and the precise statement see \cite[Theorem 2.14]{BaMomose}).

For $n=3$, Daeyeol Jeon introduced an arithmetic statement and its
proof in \cite{Jeo21} following \cite{AH91} and \cite{DF93}. In
particular, if $g_C\geq 3$ and $C$ has no degree 3 or 2 map to a
projective line and no degree 2 map to an elliptic curve over
$\overline{K}$  then the set of exact cubic points of $C$ over $K$,
$\Gamma_3'(C,K)$, is an infinite set if and only if $C$ admits a
degree three map to an elliptic curve over $K$ with positive
$K$-rank.

Observe that if {{$g_C\leq 1$ (with $C(K)\neq \emptyset$
for $g_C=1$)}}, then $C$ has a degree three map over $K$ to the
projective line, thus $\Gamma_3'(C,K)$ is always an infinite set.
Thus for curves $C$ with $C(K)\neq \emptyset$ we restrict to
$g_{C}\geq 2$ in order to study the finiteness of the set
$\Gamma_3'(C,K)$. We now introduce a few more notations.

For any $N\in \N$, let $X_0(N)$ denote the modular curve corresponding to the group $\Gamma_0(N)$. 
The modular curve $X_0(N)$ is the coarse moduli space over $\Q$ of the isomorphism classes of the generalized elliptic curves $E$ with a cyclic subgroup $C$ of order $N$.  
For any $d || N$, the $d$-th (partial) Atkin-Lehner operator $w_d$ defined by the action of the matrix $\psmat{dx}{y}{Nu}{dv}$ such that $d^2xv-Nuy=d$. Let $N=d\cdot N^\prime$ where $(d,N^\prime)=1$. Then $w_d$ defines an involution on $X_0(N)$ given by
$$(E,C)\rightarrow (E/A_{N^\prime}, (E_{N^\prime}+A)/A_{N^\prime}),$$
where $E_{N^\prime}$ denotes the kernel of the multiplication by $N^\prime$ and $A_{N^\prime}:= \mathrm{ker}([N^\prime]: A\rightarrow A)$.
Let $X_0^{+d}(N):=X_0(N)/\langle w_d\rangle$ denote the quotient curve. Thus there is a $\Q$-rational degree $2$ mapping $X_0(N)\rightarrow X_0^{+d}(N)$. 
In \cite{BD22}, the authors computed all the values of $N$ for which the curve $X_0(N)/w_N$ has infinitely many cubic points over $\Q$.
In this article, we determine all the values of the pair $(N,d)$ such that the curve $X_0^{+d}(N):=X_0(N)/w_d$ has infinitely many cubic points over the rational field $\Q$ when $N$ is a square free integer and $1<d<N$. Unless otherwise stated explicitly, throughout the article we always assume that $N$ is a square-free positive integer.
Since the the modular curves $X_0^{+d}(N)$ always has a $\Q$-rational points (cusp),
we restrict ourselves to the cases where $g_{X_0^{+d}(N)}:=genus(X_0^{+d}(N))\geq 2$. 
The main result of this article is the following:
\begin{thm} \label{Main Theorem}
Suppose $g_{X_0^{+d}(N)}\geq 2$. Then
$\Gamma_3'(X_0^{+d}(N),\mathbb{Q})$ is infinite if and only if
$g_{X_0^{+d}(N)}=2$ or $(N,d)$ is in the following list:

\begin{longtable}{|c|c|}
\hline
$g_{X_0^{+d}(N)}$&$(N,w_d)$\\
\hline
3&$(42,w_{2}),
(42,w_{7}),
(57,w_{19}),
(58,w_{2}),
(65,w_{5}),
(65,w_{13}),
(77,w_{7}),
(82,w_{41}),$\\
&$(91,w_{13}),
(105,w_{35}),
(118,w_{59}),
(123,w_{41}),
(141,w_{47}),
$\\
\hline
4&$(66 ,w_{33}),
(74 ,w_{37}),
(86 ,w_{43})$\\
\hline
5&$(106,w_{53}),(158,w_{79})$\\
\hline
6&$ (122,w_{61}),(166,w_{83})$\\
\hline
7&$(130,w_{65})$\\
\hline
8&$(178,w_{89})$\\
\hline
9&$(202,w_{101}),(262,w_{131})$\\
\hline
\end{longtable}

\end{thm}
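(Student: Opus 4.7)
My strategy combines the arithmetic criterion of Jeon (recalled in the introduction) with a case-by-case inspection of the finitely many pairs $(N,d)$ such that $g_{X_0^{+d}(N)}\ge 2$. First I would enumerate all square-free $N$ and divisors $d\mid N$ with $1<d<N$ for which $g_{X_0^{+d}(N)}\ge 2$, using the standard genus formula for $X_0(N)$ together with a Riemann--Hurwitz fixed-point count for $w_d$; this produces a finite explicit list. The genus-$2$ case is then immediate: for the rational cusp $c$ of $X_0^{+d}(N)$, Riemann--Roch gives $h^0(3c)=2$, so $|3c|$ (after removing a possible base point, or after replacing $c$ by a suitable $\Q$-divisor of degree $3$) defines a degree-$3$ morphism $X_0^{+d}(N)\to\mathbb{P}^1$ over $\Q$, making $\Gamma_3'(X_0^{+d}(N),\Q)$ infinite.

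\textbf{Existence of cubic points for pairs in the list.} For $g\ge 3$ I would first exhibit the positive cases. Every non-hyperelliptic curve of genus $3$ is trigonal over $\overline{\Q}$: its canonical image is a smooth plane quartic, and projection from any point yields a $g^1_3$, which descends to $\Q$ thanks to the rational cusp. Hence every non-hyperelliptic $X_0^{+d}(N)$ of genus $3$ carries a $\Q$-rational degree-$3$ map to $\mathbb{P}^1$ and belongs to the list. For the higher-genus entries I would exhibit a $w_d$-invariant elliptic newform factor $E$ of $J_0(N)$, descending to an elliptic quotient of $X_0^{+d}(N)$, of positive $\Q$-rank (read off from LMFDB), such that the modular parametrization $X_0^{+d}(N)\to E$ has degree exactly $3$; equivalently, the modular degree of $X_0(N)\to E$ equals $6$, using that $X_0(N)\to X_0^{+d}(N)$ has degree $2$.

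\textbf{Finiteness of cubic points for pairs not in the list.} For every remaining $(N,d)$ with $g\ge 3$ one must show $\Gamma_3'(X_0^{+d}(N),\Q)$ is finite. When $X_0^{+d}(N)$ satisfies the hypotheses of Jeon's theorem---neither trigonal, hyperelliptic, nor bielliptic over $\overline{\Q}$---finiteness reduces to the non-existence of a $\Q$-rational degree-$3$ map to a positive-rank elliptic quotient. This is checked from the newform decomposition of $J_0^{+d}(N)$ together with the modular-degree and Mordell--Weil rank data of each $w_d$-invariant elliptic factor: in every such case all $w_d$-fixed elliptic factors either have rank $0$ or are reached by parametrizations of degree at least $4$. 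When instead $X_0^{+d}(N)$ is hyperelliptic or bielliptic over $\overline{\Q}$, Jeon's theorem does not apply verbatim, and one must resort to the Abramovich--Harris / Debarre--Fahlaoui analysis of $W_3\subset\mathrm{Pic}^3$: I would verify directly that no translate of a positive-$\Q$-rank abelian subvariety of $J_0^{+d}(N)$ lies inside $W_3$, exploiting that a $g^1_2$ or bielliptic pencil alone does not produce cubic points in the sense of $\Gamma_3'$.

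\textbf{Main obstacle.} The delicate step is the finiteness direction in the hyperelliptic/bielliptic cases, where Jeon's criterion cannot be applied verbatim and one must analyze translates of positive-$\Q$-rank abelian subvarieties inside $W_3$ via the full Brill--Noether machinery together with an explicit description of $J_0^{+d}(N)$. The existence direction and the genus-$2$ case are essentially formal once the genus data and the newform/modular-degree tables are in hand; the main technical burden is to assemble, in a uniform way, the known classifications of hyperelliptic, trigonal, and bielliptic quotients $X_0^{+d}(N)$ for squarefree $N$ and to certify the rank and modular-degree information of every $w_d$-invariant elliptic factor appearing in the candidate list.
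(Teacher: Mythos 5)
Your proposal correctly identifies the overall architecture (classification of hyperelliptic/trigonal/bielliptic quotients, Jeon's criterion, reduction to degree-$3$ maps onto positive-rank elliptic curves), but it has two genuine gaps, both centred on elliptic curves whose conductor is a \emph{proper} divisor of $N$. On the existence side, the higher-genus entries of the list, namely $(106,w_{53})$, $(122,w_{61})$, $(130,w_{65})$, $(158,w_{79})$, $(166,w_{83})$, $(178,w_{89})$, $(202,w_{101})$, $(262,w_{131})$, do \emph{not} come from a conductor-$N$ newform of modular degree $6$: in each case $N=2p$ and the degree-$3$ map is the degeneracy map $X_0(2p)/\langle w_p\rangle\to X_0(p)/\langle w_p\rangle\cong E$, of degree $\psi(2)=3$, where $E$ has conductor $p$ and positive rank. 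Your recipe (``modular degree of $X_0(N)\to E$ equals $6$'') would miss every one of these. Likewise the genus-$4$ entries $(66,w_{33}),(74,w_{37}),(86,w_{43})$ are produced not by elliptic quotients but by trigonality over $\Q$, certified by showing the quadric in the Petri model of the canonical curve is a ruled surface defined over $\Q$; your genus-$3$ plane-quartic argument does not extend to genus $4$ without this quadric analysis.

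On the finiteness side there is a more serious gap. For $E$ of conductor $M$ with $M\mid N$ and $M<N$, the group $\mathrm{Hom}_\Q(J_0(N)^{w_d},E)$ is free of rank $n$ equal to the number of divisors of $N/M$, spanned by the composites of $f_M$ with the degeneracy maps. A putative degree-$3$ map may be \emph{any} integer linear combination of these basis elements, so it does not suffice to check that each individual composite has degree $\geq 4$ or that each elliptic factor has rank $0$; one must compute the degree quadratic form on this whole lattice and verify it does not represent $3$. This is precisely the technical heart of the paper: descending the Derickx--Orli\'c optimal $E$-isogenous quotient statement from $J_0(N)$ to $J_0(N)^{w_d}$ and computing the degree pairing of the degeneracy composites, yielding forms such as $4x_1^2-4x_1x_2+4x_2^2$ for $(195,65,65a1)$ or $2g(x_1,x_2,x_3,x_4)$ for $(222,37,37a1)$, which visibly omit the value $3$. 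Your proposal contains no mechanism for this computation, and these are exactly the cases (together with $(111,37,37a1)$ and the other entries of Table~\ref{second remaining cases}) that would slip through. A minor further point: for genus $2$ the system $|3c|$ is a base-point-free $g^1_3$ only when the cusp $c$ is not a Weierstrass point; the paper instead verifies the existence of at least three rational points (or two rational points swapped by the hyperelliptic involution) and invokes the Jeon--Kim--Schweizer criterion.
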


{One of the key idea to decide the set $\Gamma_3'(X, \mathbb{Q})$ is finite or infinite is to check whether there is a $\Q$-rational degree $3$ mapping between $X$ and an elliptic curve $E$ with positive $\Q$-rank. 
Using the existing ideas we can not directly compute the values of $N$ and $d$ such that the set $\Gamma_3'(X_0^{+d}(N), \mathbb{Q})$ is infinite.
For example, with the existing ideas we can not determine whether there is a degree $3$ mapping $X_0(Np)/\langle w_p\rangle\rightarrow E$, where $p$ is a prime, $p\nmid N$ and $\mathrm{Cond}(E)=p$. 
But, in this article, based on the ideas of M.Derick and P.Orli$\acute{c}$ from \cite{DO23}, we develop a criterion to check whether there is a $\Q$-rational degree $3$ mapping $X_0(Np)/\langle w_p\rangle\rightarrow E$, where $p$ is a prime, $p\nmid N$ and $\mathrm{Cond}(E)=p$.}

The \texttt{MAGMA} codes for computing the models of $X_0^{+d}(N)$ can be found at \url{https://github.com/Tarundalalmath/Models-for-X0-N-d} and the \texttt{MAGMA} codes for computing the points of $X_0^{+d}(N)$ over finite fields can be found at \url{https://github.com/FrancescBars/Magma-functions-on-Quotient-Modular-Curves}.


\section{General considerations}
\label{General considerations section}
We recall the notion of gonality of a curve. Given a complete curve $C$ over $K$, the gonality of $C$ is defined as follows:
$$\rm{Gon}(C):=min\{\deg(\varphi)\mid \varphi: C\rightarrow \mathbb{P}^1 \mathrm{defined \ over} \ \overline{K}\}.$$

The following theorem plays a crucial role in deciding whether the set $\Gamma_3'(C,K)$ is infinite or not.
\begin{lem}\cite[Lemma 1.2]{Jeo21} (see also \cite[Lemma 2]{BD22})
\label{main lemma for gonality >=4}
Suppose $C$ has $Gon(C)\geq 4$, { $P\in C(K)$} and does not have a degree $\leq 2$ map to
an elliptic curve. If $\Gamma_3'(C,K)$ is an infinite set then $C$ admits a $K$-rational map of degree $3$ to an elliptic
curve with positive $K$-rank.
\end{lem}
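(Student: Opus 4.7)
The plan is to reduce the statement to a question about subvarieties of the Jacobian via Faltings' theorem, and then invoke the geometric classification of such subvarieties due to Abramovich-Harris \cite{AH91} and Debarre-Fahlaoui \cite{DF93} to produce the desired map. I would fix $P \in C(K)$ to identify $\mathrm{Pic}^3(C)$ with $J(C) = \mathrm{Pic}^0(C)$ over $K$ via $D \mapsto [D - 3P]$, and then study $W_3(C) \subset \mathrm{Pic}^3(C)$.

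The first step is to show that $W_3(C)(K)$ is infinite. Each cubic point $Q \in \Gamma_3'(C,K)$ has a Galois orbit $\{Q, Q', Q''\}$ of size $3$, and $Q + Q' + Q''$ is an effective $K$-rational divisor of degree $3$, hence a $K$-rational point of $W_3(C)$. Because $\mathrm{Gon}(C) \geq 4$ forbids any $g^1_3$ on $C$, every degree-$3$ line bundle has $h^0 \leq 1$, so the Abel-Jacobi map $C^{(3)} \to W_3(C)$ is injective and distinct cubic points produce distinct $K$-points. Faltings' theorem on rational points of subvarieties of abelian varieties (Mordell-Lang for number fields) then produces a translate $x + A \subseteq W_3(C)$ defined over $K$, where $A \subseteq J(C)$ is a positive-dimensional abelian subvariety with $\mathrm{rank}\, A(K) \geq 1$.

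The second step is to convert this subvariety into the map claimed by the lemma. The classification in \cite{AH91} and \cite{DF93} of abelian subvarieties sitting inside $W_d(C)$ for small $d$ asserts, in the case $d = 3$, that the presence of $x + A$ forces one of the following on $C$: (i) a degree-$3$ map to $\mathbb{P}^1$; (ii) a degree-$2$ map to an elliptic curve; or (iii) a degree-$3$ map $\varphi\colon C \to E$ to an elliptic curve $E$ which is $K$-isogenous to $A$. The hypotheses $\mathrm{Gon}(C) \geq 4$ and the absence of any degree-$\leq 2$ map to an elliptic curve rule out (i) and (ii), leaving (iii), and the isogeny transfers the positivity of rank from $A$ to $E$. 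The main obstacle is the careful passage from the geometric classification (which is stated over $\overline{K}$) to the $K$-rational conclusion: one must argue that the abelian subvariety produced by Faltings is itself defined over $K$ (which follows from the Galois-equivariance of its Zariski-dense set of $K$-rational points) and that the resulting elliptic quotient together with the map $\varphi\colon C \to E$ descends to $K$, using that the translate $x + A$ is Galois-stable inside $W_3(C)$.
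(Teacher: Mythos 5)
Your argument is correct and follows essentially the same route the paper relies on: the paper does not prove this lemma but quotes it from \cite[Lemma 1.2]{Jeo21} and \cite[Lemma 2]{BD22}, and both that proof and the sketch in the paper's introduction proceed exactly as you do --- Galois orbits of cubic points give infinitely many $K$-points of $W_3(C)$, injectivity of $C^{(3)}\to \mathrm{Pic}^3(C)$ from $\mathrm{Gon}(C)\geq 4$, Faltings' theorem produces a positive-rank abelian translate inside $W_3(C)$, and the Abramovich--Harris/Debarre--Fahlaoui classification together with the hypotheses rules out everything except a degree-$3$ map to an elliptic curve of positive $K$-rank. You also correctly single out the descent of the elliptic quotient to $K$ as the one delicate step, which is where the cited proofs concentrate their effort.
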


The following results are well known:
\begin{thm}
\begin{enumerate}
\item[(i).](\cite{MaHa99}) The values of $(N,d)$ such that $X_0(N)/\langle w_d\rangle$ is hyperelliptic are given in Table \ref{Hyperelliptic values}.
\item[(ii).] (\cite{HS99}) The values of $(N,d)$ such that $\mathrm{Gon}(X_0(N)/\langle w_d\rangle)=3$ are given in Table \ref{Trigonal values}.
\item[(iii).](\cite{BGK20}) The values of $(N,d)$ such that $X_0(N)/\langle w_d\rangle$ is bielliptic are given in Table \ref{Bielliptic values}.
\end{enumerate}
\end{thm}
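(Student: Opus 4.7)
The theorem is a compilation of three classifications already established in the literature (\cite{MaHa99}, \cite{HS99}, \cite{BGK20}), so the plan is simply to invoke them; nothing new is needed. For completeness I describe the common strategy these references follow, since our later arguments require knowing why the lists are complete. For each of the three properties $P\in\{\text{hyperelliptic},\,\text{trigonal},\,\text{bielliptic}\}$, the argument splits into three steps: (a) produce a closed formula for $g_{X_0^{+d}(N)}$ via Riemann--Hurwitz applied to the degree-$2$ cover $X_0(N)\to X_0^{+d}(N)$, which reduces to counting fixed points of $w_d$ on $X_0(N)$; these are CM points whose number is given by a classical class-number formula depending only on the discriminants $-4d$ or $-d$; (b) prove a general upper bound that restricts the $(N,d)$ enjoying $P$ to a finite explicit list; (c) for each surviving $(N,d)$ either exhibit the relevant map or rule it out by a direct computation on the canonical model.

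For part (i), the upper bound in step (b) combines Ogg's mod-$p$ inequality $\#C(\F_{p^2})\le 2(p+1)$ for a hyperelliptic curve $C/\Q$ with good reduction at $p$ and the trace-formula computation of $\#X_0^{+d}(N)(\F_{p^2})$ in terms of Hecke eigenvalues on the $+1$-eigenspace of $w_d$ on $S_2(\Gamma_0(N))$. For the residual finite list one exhibits a hyperelliptic involution explicitly (often descended from $X_0(N)$ or coming from an auxiliary Atkin--Lehner operator) or refutes hyperellipticity from the canonical embedding.

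For part (ii), the Castelnuovo--Severi inequality applied to the degree-$2$ cover $X_0(N)\to X_0^{+d}(N)$ together with a hypothetical trigonal pencil $X_0^{+d}(N)\to\mathbb{P}^1$ yields $g_{X_0(N)}\le 2+3\,g_{X_0^{+d}(N)}$; combined with the explicit genus formula for $X_0(N)$ and Abramovich's spectral lower bound on $\mathrm{Gon}(X_0(N))$ this forces $N$ into a finite set. The remaining candidates are then either shown to be trigonal by exhibiting a $g^1_3$ (frequently induced by an elliptic Atkin--Lehner subquotient) or shown not to be trigonal by inspecting their canonical model. Part (iii) proceeds in parallel: a bielliptic structure on $X_0^{+d}(N)$ is first sought among the residual Atkin--Lehner involutions $w_e$ with $e || N$ and $(e,d)=1$, which reduces to enumerating triples $(N,d,e)$ for which $X_0(N)/\langle w_d,w_e\rangle$ has genus~$1$; the absence of exotic (non-Atkin--Lehner) bielliptic structures is then confirmed by Castelnuovo--Severi-type exclusions using the mod-$p$ bounds for elliptic quotients.

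The main obstacle in each of the three references is not any single theoretical ingredient but the size of the case-by-case verification that remains after the theoretical bounds: one must compute, for every $(N,d)$ in a sizeable finite list, the genus, several $\F_{p^2}$-point counts, the canonical model, and the list of modular elliptic quotients, and then match these against the desired geometric property. Since the present article takes Tables \ref{Hyperelliptic values}, \ref{Trigonal values} and \ref{Bielliptic values} as input, no such enumeration is carried out here beyond citing the three references.
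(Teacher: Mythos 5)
Your proposal takes exactly the same route as the paper: the theorem is established purely by citation of \cite{MaHa99}, \cite{HS99} and \cite{BGK20}, and the paper supplies no argument beyond reproducing the resulting tables. Your expository sketch of how those references work is broadly accurate and harmless, though not needed (one small slip: Ogg's bound for a hyperelliptic curve over $\F_{p^2}$ is $\#C(\F_{p^2})\le 2(p^2+1)$, not $2(p+1)$).
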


We say that a triple $(N,d,E)$, where $N$ is a natural number, $d||N$ and $E$ is an
elliptic curve over $\mathbb{Q}$ with positive $\mathbb{Q}$-rank, is
admissible if there is a $\Q$-rational degree 3 mapping $X_0^{+d}(N)\rightarrow E$.

Throughout this section we consider the values of $N,d$ such that $\mathrm{Gon}(X_0^{+d}(N))\geq 4$ and $X_0^{+d}(N)$ has no degree $\leq 2$ map to an elliptic curve. By Lemma \ref{main lemma for gonality >=4}, the set $\Gamma_3'(X_0^{+d}(N), \Q)$ is infinite if and only if the triple $(N,d,E)$ is admissible for some elliptic curve $E$ with positive $\Q$-rank.

{ The following lemma gives a criterion to rule out the triples which are not admissible (cf. \cite[Lemma 4]{BD22}).}

\begin{lema}\label{sieve}
 If $(N,d,E)$ is admissible, then:
\begin{enumerate}
\item $E$ has conductor $M$ with $M|N$ and for any prime $p\nmid N$ we have
$|\overline{X}_0^{+d}(N)(\mathbb{F}_{p^n})|\leq 3|\overline{E}(\mathbb{F}_{p^n})|$ and
$|\overline{X}_0(N)(\mathbb{F}_{p^n})|\leq 6|\overline{E}(\mathbb{F}_{p^n})|$, $\forall n\in \N$.
\item \label{ii} if conductor of $E$ is $N$, then the degree of the strong Weil
parametrization of $E$ divides 6.
\item \label{iii} for any prime $p\nmid N$ we have
$\frac{p-1}{12}\psi(N)+2^{\omega(N)}\leq 6 (p+1)^2$, where
$\omega(N)$ is the number of prime divisors of $N$ and {$\psi=N \prod_{\substack{q|N\\ q \ prime}}(1+1/q)$} is the
$\psi$-Dedekind function,

\item for any Atkin-Lehner involution $w_r$ of $X_0(N)$ with $r\neq
d$, we have $g_{X_0^{+d}(N)}\leq 3+2\cdot g_{X_0^{+d}(N)/w_r}+2$.

\end{enumerate}
\end{lema}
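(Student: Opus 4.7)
The plan is to prove each of the four assertions using three tools: (a) a modular parametrization of $E$ of the right level, (b) the elementary bound on $\mathbb{F}_q$-points under a morphism of curves, and (c) the Castelnuovo--Severi inequality.

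\textbf{Parts (1) and (2).} Composing the degree-$2$ quotient $X_0(N)\to X_0^{+d}(N)$ with the hypothesised $\Q$-morphism $X_0^{+d}(N)\to E$ of degree $3$ gives a nonconstant $\Q$-morphism $\varphi:X_0(N)\to E$ of degree $6$. Pushing forward via the Albanese, $E$ becomes an isogeny quotient of $J_0(N)$; the theory of newforms (combined with modularity) then forces $\mathrm{cond}(E)=M$ with $M\mid N$. The point-count inequalities follow from the elementary fact that any degree-$k$ morphism of smooth projective curves with good reduction over $\mathbb{F}_{p^n}$ has fibres of size at most $k$, whence $|C(\mathbb{F}_{p^n})|\leq k\,|D(\mathbb{F}_{p^n})|$, applied in turn to the degree-$3$ map $X_0^{+d}(N)\to E$ and to $\varphi$. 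For (2), assume $\mathrm{cond}(E)=N$ and let $\phi_0:X_0(N)\to E_0$ be the optimal (strong Weil) parametrization in the isogeny class of $E$. Since $\ker(J_0(N)\to E_0)$ is connected, $\varphi$ factors as $\iota\circ\phi_0$ for an isogeny $\iota:E_0\to E$, so $\deg(\phi_0)\mid \deg(\varphi)=6$.

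\textbf{Part (3).} Apply (1) with $n=2$ and use the Hasse--Weil bound $|\overline{E}(\mathbb{F}_{p^2})|\leq (p+1)^2$ to obtain $|\overline{X}_0(N)(\mathbb{F}_{p^2})|\leq 6(p+1)^2$. For the lower bound on the same quantity, I would rely on two classical inputs. First, the Eichler--Deuring mass formula: at any prime $p\nmid N$, all supersingular points of $X_0(N)$ over $\overline{\mathbb{F}}_p$ are already $\mathbb{F}_{p^2}$-rational, and their number is at least $\tfrac{(p-1)\psi(N)}{12}$. Second, for $N$ squarefree all $2^{\omega(N)}$ cusps of $X_0(N)$ are $\Q$-rational and disjoint from the supersingular locus. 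Summing these contributions and comparing with the upper bound produces the stated inequality.

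\textbf{Part (4).} On $X_0^{+d}(N)$ we have two $\Q$-rational morphisms: the hypothesised degree-$3$ map to the genus-$1$ curve $E$, and the degree-$2$ quotient to $X_0^{+d}(N)/w_r$. Because $\gcd(2,3)=1$, any common factorisation through an intermediate curve must be trivial, so the Castelnuovo--Severi inequality applies and yields
$$g_{X_0^{+d}(N)}\ \leq\ 3\cdot g_E + 2\cdot g_{X_0^{+d}(N)/w_r} + (3-1)(2-1) \ =\ 3 + 2\,g_{X_0^{+d}(N)/w_r} + 2.$$

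\textbf{Expected obstacle.} The only delicate ingredient is the factorisation in (2): one must invoke the connectedness of the kernel of the optimal quotient $J_0(N)\to E_0$, so that every $\Q$-morphism $X_0(N)\to E$ with $E$ isogenous to $E_0$ factors through $\phi_0$. Everything else is a bookkeeping combination of classical results (Eichler--Deuring, rationality of cusps for squarefree level, Hasse--Weil, Castelnuovo--Severi).
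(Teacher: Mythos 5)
Your proof is correct and follows essentially the same route as the paper, which does not reprove this lemma but refers to \cite[Lemma 4]{BD22}: composing with the degree-$2$ quotient to get a degree-$6$ map $X_0(N)\to E$ (whence $\mathrm{cond}(E)\mid N$, the point-count bounds, and divisibility by the strong Weil degree via optimality), the Eichler--Deuring supersingular count plus the $2^{\omega(N)}$ rational cusps against the Hasse--Weil bound over $\mathbb{F}_{p^2}$ for part (3), and Castelnuovo--Severi with the coprime degrees $2$ and $3$ for part (4). All four steps are the intended ones and I see no gaps.
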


\begin{cor}
For $N> 623$, the triple $(N,d,E)$ is not admissible.
\end{cor}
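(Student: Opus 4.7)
The plan is a direct application of Lemma~\ref{sieve}(iii), which forces any admissible triple $(N,d,E)$ and every prime $p \nmid N$ to satisfy
\[
\frac{p-1}{12}\psi(N) + 2^{\omega(N)} \;\le\; 6(p+1)^2.
\]
So it suffices to exhibit, for each squarefree $N > 623$, one prime $p \nmid N$ for which this bound fails. The natural candidate is $p^\ast$, the smallest prime not dividing $N$; since $\psi$ is multiplicative with $\psi(q) = q+1$ for $q$ prime, one has
\[
\psi(N) \;\ge\; N \prod_{\substack{q\ \text{prime}\\ q < p^\ast}} \Bigl(1 + \tfrac{1}{q}\Bigr),
\]
so $\psi(N)$ grows at least linearly in $N$, with a constant that improves as $p^\ast$ grows.

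I would then split into cases on $p^\ast$. The case $p^\ast = 2$ (i.e.\ $N$ odd) is the tight one: here the sieve inequality rearranges to $\psi(N) \le 648 - 12 \cdot 2^{\omega(N)}$. If $\omega(N) = 1$, then $N$ is an odd prime with $\psi(N) = N+1$, and the bound reduces to $N \le 623$, contradicting $N > 623$. If $\omega(N) \ge 2$, the right-hand side is at most $648 - 48 = 600$, while $\psi(N) \ge N > 623$, again a contradiction. So no odd $N > 623$ is admissible. The case $p^\ast = 3$ (where $2 \mid N$, $3 \nmid N$) uses $\psi(N) \ge 3N/2 > 935$, so $\psi(N)/6 > 155 > 96 = 6(3+1)^2$, and the inequality fails. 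The subsequent cases $p^\ast = 5,7,11,\dots$ go the same way: each additional prime $q < p^\ast$ contributes a factor $1 + 1/q$ to the lower bound on $\psi(N)/N$, which more than compensates for the growth of $6(p^\ast+1)^2/(p^\ast-1)$, and a one-line numerical comparison finishes each case. Only finitely many cases (those for which $p^\ast$ remains small) have to be written out explicitly.

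The only work is bookkeeping for this handful of cases; no individual step is analytically delicate. The specific constant $623$ is tight for the argument: it emerges from the extremal subcase $N$ an odd prime with $p^\ast = 2$, where Lemma~\ref{sieve}(iii) becomes $(N+1)/12 + 2 \le 54$, equivalent to $N \le 623$.
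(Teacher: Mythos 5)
Your proposal is correct and follows essentially the same route as the paper: the paper's proof defers to \cite[Corollary 5]{BD22}, which is exactly this application of Lemma \ref{sieve}(iii) with the smallest prime $p\nmid N$, and the extremal case $N$ an odd prime with $p=2$ is precisely where the constant $623$ comes from. Your case analysis on $p^\ast$ and the numerical checks are all sound.
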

\begin{proof}
The proof is similar to that of \cite[Corollary 5]{BD22}.
\end{proof}

We recall the following result (cf. \cite[Lemma 6]{BD22})
{\begin{lema}\label{lem1}
Let $E/\mathbb{Q}$ be an elliptic curve of conductor $N$ and $\varphi: X_0(N)\rightarrow E$ be the strong Weil parametrization of degree $k$ defined over $\mathbb{Q}$.
Suppose
that $w_N$ acts as +1 on the modular form $f_E$ associated to $E$.
Then $\varphi$ factors through $X_0^+(N)$ (and $k$ is even).
\end{lema}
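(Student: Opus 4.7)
The plan is to deduce the factorization by showing that the $w_N$-invariance of $f_E$ causes the corresponding differential to descend to $X_0^+(N)$, whence $\varphi$ factors through the quotient by uniqueness of the strong Weil parametrization.

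First, I would descend the differential. Set $\omega_f := 2\pi i f_E(\tau)\,d\tau$, the holomorphic 1-form on $X_0(N)$; up to the Manin constant this equals $\varphi^*\omega_E$, where $\omega_E$ is the invariant differential on $E$. The hypothesis ``$w_N$ acts as $+1$ on $f_E$'' is equivalent to $w_N^*\omega_f = \omega_f$. Since $\pi: X_0(N)\to X_0^+(N)$ is a degree-$2$ Galois cover with Galois group $\langle w_N\rangle$, this invariance is equivalent to the existence of a unique holomorphic 1-form $\bar\omega$ on $X_0^+(N)$ with $\pi^*\bar\omega=\omega_f$; since $f_E$ has rational $q$-expansion, $\bar\omega$ is defined over $\Q$.

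Next, I would construct an optimal parametrization from $X_0^+(N)$. The form $\bar\omega$ spans a $1$-dimensional $\Q$-rational subspace of $H^0(X_0^+(N),\Omega^1)$, and by duality corresponds to an elliptic quotient of $\mathrm{Jac}(X_0^+(N))$ isogenous over $\Q$ to $E$. Composing the Abel--Jacobi embedding of $X_0^+(N)$ into its Jacobian (based at $\bar\infty := \pi(\infty)$) with the optimal $\Q$-rational projection onto $E$, and normalizing so that $\bar\infty \mapsto 0_E$, yields a morphism $\psi: X_0^+(N)\to E$ defined over $\Q$ whose pullback of $\omega_E$ equals $\bar\omega$ up to the same Manin-constant scalar as for $\varphi$.

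Finally, the composite $\psi\circ\pi: X_0(N)\to E$ is a $\Q$-rational morphism sending $\infty\mapsto 0_E$ and satisfying $(\psi\circ\pi)^*\omega_E = \pi^*\bar\omega = \omega_f$ (up to the same scalar), matching $\varphi^*\omega_E$. Two morphisms from $X_0(N)$ to $E$ with equal pullbacks of $\omega_E$ differ only by a translation on $E$, and requiring both to fix $\infty\mapsto 0_E$ pins down the translation. Hence $\varphi = \psi\circ\pi$ factors through $X_0^+(N)$, and $k=\deg\varphi = 2\cdot\deg\psi$ is even. The main subtlety is ensuring $\psi$ is defined over $\Q$ rather than only over $\mathbb{C}$; this is guaranteed because the optimal quotient of $\mathrm{Jac}(X_0^+(N))$ attached to $\bar\omega$, and the resulting modular parametrization, are intrinsic to the $\Q$-structure. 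The remainder is formal.
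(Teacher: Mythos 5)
The paper does not actually reprove this lemma (it is recalled from \cite[Lemma 6]{BD22}), but the mechanism behind it is visible in the proof of Lemma~\ref{descent} here: from $w_N^*f_E=f_E$ one only gets $\varphi\circ w_N=\varphi+P$ for some $2$-torsion point $P\in E(\Q)$, namely $P=\varphi(0)-\varphi(\infty)=\varphi(0)$, and the entire content of the lemma is that $P=0$. Your argument hides exactly this point. The step ``composing Abel--Jacobi with the optimal $\Q$-rational projection onto $E$ \dots\ yields $\psi$ whose pullback of $\omega_E$ equals $\bar{\omega}$ up to the same Manin-constant scalar as for $\varphi$'' is asserted rather than proved, and it is essentially equivalent to the statement being proved. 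What the $1$-dimensional rational subspace spanned by $\bar{\omega}$ gives you for free is an optimal elliptic quotient of $\mathrm{Jac}(X_0^{+}(N))$ that is a priori only \emph{isogenous} to $E$; unwinding your construction one finds only that $\lambda\circ\varphi$ factors through $X_0^{+}(N)$ for some isogeny $\lambda$ (possibly of degree $2$, killing $P$). Equivalently, $\varphi_*\circ\pi^*:\mathrm{Jac}(X_0^{+}(N))\to E$ pulls $\omega_E$ back to \emph{twice} the desired form, and whether it is divisible by $2$ in $\Hom$ is precisely the question of whether $\varphi_*$ kills all of $\ker(\pi_*)$ and not just its connected component $\mathrm{im}(1-w_{N,*})$.

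A clear symptom that something is missing: your argument uses no property of $w_N$ beyond $w_N^*f_E=f_E$, so it would prove the same unconditional factorization for any partial Atkin--Lehner involution $w_d$ fixing $f_E$ --- which is false in general, as Lemma~\ref{descent}(\ref{descent 3}) and Corollary~\ref{first application of descent 2} show (when $E$ has rational $2$-torsion and $w_d$ has no fixed point, only an isogenous curve receives a map). The special input you need is that the Fricke involution always has a fixed point on $X_0(N)$, e.g.\ the point corresponding to $\tau=i/\sqrt{N}$. Evaluating $\varphi\circ w_N=\varphi+P$ at such a point gives $P=0$ immediately; equivalently, $\pi$ is ramified, so $\ker(\pi_*)$ is connected and $\varphi_*$ genuinely factors through $\pi_*$. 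With that one observation added your differential-descent framework closes up; without it the proof is circular at the normalization of $\psi$. (The worry you do flag, $\Q$-rationality of $\psi$, is not the real issue.)
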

We now make a minor improvement to \cite[Lemma 7]{BD22}.
\begin{lema}\label{descent} Consider a degree {$k$} map $\varphi:X \rightarrow E$ defined over $\Q$,
where $X$ is a quotient modular curve $X_0(N)/W_N$ with $W_N$ a
proper subgroup of $B(N)$ ($B(N)$ is the subgroup of $Aut(X_0(N))$
generated by all Atkin-Lehner involutions). Assume that $cond(E)=M$ $(M|N)$. Let $d\in \mathbb{N}$ with $d||M$, $(d,N/d)=1$ and { $w_d\notin W_N$}, such that $w_d$ acts as $+1$ on the modular form $f_E$ associated to $E$. Then,

\begin{enumerate}
\item if $E$ has no non-trivial $2$-torsion over $\Q$,
then $\varphi$ factors through $X/\langle w_d\rangle$ and {$k$} is even. \label{descent 1}
{\item if $w_d$ has a fixed point on $X$, then $\varphi$ factors through $X/\langle w_d\rangle$ and {$k$} is even. \label{descent 2}} 
\item if $E$ has non-trivial $2$-torsion over
$\Q$ and $k$ is odd, then we obtain a degree
{$k$} map $\varphi':X/\langle w_d\rangle\rightarrow
{E^\prime}$, by taking $w_d$-invariant to $\varphi$,
{where $E^\prime$ is an elliptic curve isogenous to
$E$}.\label{descent 3}

\end{enumerate}
\end{lema}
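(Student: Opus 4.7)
The plan is to follow the standard pullback-of-differentials argument and reduce every case to the fact that post-composing $\varphi$ with $w_d$ can only translate it by a $\Q$-rational $2$-torsion point of $E$. Concretely, let $\omega_E$ be the invariant differential on $E$; the pullback $\varphi^*\omega_E$ is a regular differential on $X$, and after pulling back further to $X_0(N)$ (or to $X_0(M)$ via an intermediate map, since $\mathrm{cond}(E)=M\mid N$) it becomes, up to a nonzero scalar, $f_E(q)\,dq/q$. The assumption $d\mid\mid M$ with $(d,N/d)=1$ ensures $w_d$ acts coherently at the relevant levels, and the assumption that $w_d$ acts as $+1$ on $f_E$ translates into $w_d^*(\varphi^*\omega_E)=\varphi^*\omega_E$, i.e.\ $(\varphi\circ w_d)^*\omega_E=\varphi^*\omega_E$. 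Two morphisms from a curve to an elliptic curve whose pullbacks of the invariant differential agree differ by a translation, so there exists $t\in E(\overline{\Q})$ with $\varphi\circ w_d=\varphi+t$. Since $\varphi$ and $w_d$ are $\Q$-rational, so is $t$; applying $w_d$ twice gives $\varphi=\varphi+2t$, hence $t\in E(\Q)[2]$.

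With this in hand, each case is short. For \eqref{descent 1}, the hypothesis $E(\Q)[2]=0$ forces $t=0$, so $\varphi\circ w_d=\varphi$ and $\varphi$ descends to $\overline{\varphi}\colon X/\langle w_d\rangle\to E$; since $X\to X/\langle w_d\rangle$ has degree $2$, $k=2\deg(\overline{\varphi})$ is even. For \eqref{descent 2}, if $P\in X$ is a fixed point of $w_d$ then $\varphi(P)=\varphi(w_dP)=\varphi(P)+t$, so $t=0$ and the argument of \eqref{descent 1} again applies.

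For \eqref{descent 3}, suppose $k$ is odd. By \eqref{descent 1} the map $\varphi$ does not factor through $X/\langle w_d\rangle$, so $t\ne 0$. Then $\langle t\rangle\cong \Z/2\Z$ is a $\Q$-rational subgroup of $E$; let $\pi\colon E\to E':=E/\langle t\rangle$ be the degree-$2$ $\Q$-isogeny. The composition satisfies
\[
\pi\circ\varphi\circ w_d=\pi\circ(\varphi+t)=\pi\circ\varphi+\pi(t)=\pi\circ\varphi,
\]
so $\pi\circ\varphi$ is $w_d$-invariant and descends to $\varphi'\colon X/\langle w_d\rangle\to E'$; its degree is $\deg(\pi)\cdot k/2=k$, as required, and $E'$ is $\Q$-isogenous to $E$.

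The only step that is not purely formal is the identification $\varphi^*\omega_E\propto f_E(q)\,dq/q$ in the present generality, where $\varphi$ is defined on a proper quotient $X=X_0(N)/W_N$ rather than on $X_0(N)$ itself, and where $\mathrm{cond}(E)=M$ may be a strict divisor of $N$. The argument must invoke compatibility of the Atkin--Lehner action across levels together with the hypotheses $w_d\notin W_N$, $d\mid\mid M$, and $(d,N/d)=1$ to justify that $w_d$ is well-defined on $X$ and that its eigenvalue on $f_E$ transports to the prescribed action on $\varphi^*\omega_E$; this is the one place where the improvement over \cite[Lemma 7]{BD22} requires care.
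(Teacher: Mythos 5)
Your argument is correct and takes essentially the same route as the paper: the paper's proof simply cites \cite[Lemma 7]{BD22} for parts (1) and (3), extracts from that proof the relation $\varphi\circ w_d=\varphi+P$ with $P$ a $2$-torsion point of $E$, and deduces part (2) exactly as you do, from a fixed point of $w_d$ forcing $P=0$. Your write-up merely makes explicit the differential/translation argument supplied by the cited lemma (including the cross-level Atkin--Lehner compatibility, which you rightly flag as the one delicate point when $\mathrm{cond}(E)=M$ properly divides $N$).
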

\begin{proof}
The proofs of (1) and (3) can be found in \cite[Lemma 7]{BD22}. Moreover, from the proof of \cite[Lemma 7]{BD22}, we get $\varphi\circ w_d=\varphi +P$, where $P$ is a 2-torsion point of $E(\mathbb{C})$. Thus it is easy to see that if $w_d$ has a fixed point on $X$, then $P$ is the trivial zero of $E$ and $\varphi$ factors through $X/\langle w_d\rangle$.
\end{proof}

As an immediate consequence of Lemma \ref{descent}(\ref{descent 2}), we obtain the following result:
\begin{cor}
\label{first application of descent 2}
The following triples are not admissible
\begin{align*}
&(130,2,65a1), (130,5,65a1),(130,10,65a1),(130,13,65a1),(130,26,65a1),(185,5,185c1)\\
&(185,37,185c1), (195,3,65a1), (195,5,65a1), (195,13,65a1), (195,39,65a1).
\end{align*}
 
\end{cor}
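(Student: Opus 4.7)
My plan is to apply Lemma \ref{descent}(\ref{descent 2}) to each triple in the list. Fix $(N,d,E)$, set $X = X_0^{+d}(N)$ and $W_N = \langle w_d\rangle$, and suppose toward a contradiction that a $\mathbb{Q}$-rational degree-$3$ map $\varphi : X \to E$ exists. To invoke the lemma I will exhibit a divisor $r$ with $r||\mathrm{cond}(E)$, $(r, N/r) = 1$ and $r \neq d$ (so that $w_r \notin W_N$), such that (i) $w_r$ acts as $+1$ on the newform $f_E$, and (ii) $w_r$ has a fixed point on the quotient $X_0^{+d}(N)$. Granting these, Lemma \ref{descent}(\ref{descent 2}) forces $\varphi$ to factor through $X_0^{+d}(N)/\langle w_r\rangle$ with $\deg\varphi$ even, contradicting $\deg\varphi = 3$, whence $(N,d,E)$ is inadmissible.

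To carry out (i), I would tabulate the Atkin-Lehner eigenvalues of $f_E$ for $E \in \{65a1, 185c1\}$ (available from LMFDB, or computable from a short Fourier-coefficient calculation). Since $65 = 5\cdot 13$ and $185 = 5\cdot 37$, the nontrivial candidates for $r$ are $\{5, 13, 65\}$ and $\{5, 37, 185\}$ respectively; after excluding $r = d$, at least one option with $+1$ eigenvalue survives in each of the eleven cases.

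The main technical step is (ii). A point of $X_0(N)/\langle w_d\rangle$ fixed by $w_r$ is the image of a point $P \in X_0(N)$ with $w_r(P) \in \{P, w_d(P)\}$; since $w_d$ and $w_r$ commute, this means $P$ is fixed on $X_0(N)$ either by $w_r$ or by the composite $w_d w_r = w_{dr/\gcd(d,r)^2}$. The numbers of such fixed points on $X_0(N)$ are given by Ogg's classical class-number formulas for CM points on modular curves (the fixed locus of $w_s$ on $X_0(N)$ records CM by orders of discriminant $-s$ or $-4s$, subject to local Heegner conditions at the primes dividing $N/s$). I expect this to be the main obstacle: for each of the eleven triples one must carry out a short but concrete class-number check to confirm that at least one of the two CM loci is non-empty. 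Once (i) and (ii) are verified case by case, the lemma yields the stated inadmissibility and the corollary follows.
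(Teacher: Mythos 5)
Your proposal is correct and follows essentially the same route as the paper: for each triple one picks an auxiliary Atkin--Lehner involution $w_r$ with $r\,\|\,\mathrm{cond}(E)$, $w_r\notin\langle w_d\rangle$, acting as $+1$ on $f_E$ and having a fixed point on $X_0^{+d}(N)$, and Lemma \ref{descent}(\ref{descent 2}) then forces the degree to be even, contradicting $\deg\varphi=3$. The only (immaterial) difference is how the fixed-point hypothesis is verified: the paper deduces it from Riemann--Hurwitz applied to the further quotient, whereas you propose Ogg's class-number formulas for the fixed loci of $w_r$ and $w_{dr/\gcd(d,r)^2}$ on $X_0(N)$; both are routine and lead to the same conclusion.
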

\begin{proof}
Consider the triple $(130,5,65a1)$. Suppose $f:X_0^{+5}(130)\rightarrow 65a1$ is a $\Q$-rational degree $3$ mapping. By Riemann-Hurwitz theorem, we see that $w_{13}$ has fixed points on $X_0^{+5}(130)$. By Lemma \ref{descent} (\ref{descent 2}), $f$ must factors through $X_0(130)/\langle w_5,w_{13} \rangle$. Which is not possible since $\deg(f)$ is odd. Hence the triple $(130,5,65a1)$ is not admissible. A similar argument will work for the other cases  (for the cases (195,3,65a1) and (195,39,65a1) apply Lemma \ref{descent}(\ref{descent 2}) with the involutions $w_{13}$ and $w_5$ respectively).
\end{proof}

After applying Lemma \ref{sieve}, Lemma \ref{descent}(\ref{descent 1}) and Corollary \ref{first application of descent 2}, we can conclude that if $\mathrm{Gon}(X_0^{+d}(N))\geq 4$ and $X_0^{+d}(N)$ has no degree $\leq 2$ map to an elliptic curve, then the triple $(N,d,E)$ is not admissible possibly only for the following values of $N,d$ and $E$:
\begin{small}
\begin{center}
\begin{tabular}{|c|c|c|}
\hline
N& $d$ &E \\
\hline
106& 53& 53a1\\
\hline
122&61&61a1\\
\hline
129&43&43a1\\
\hline
130&65&65a1\\
\hline
158&79&79a1\\
\hline
166&83&83a1\\
\hline
178&89&89a1\\
\hline
\end{tabular}
\begin{tabular}{|c|c|c|}
\hline
N& $d$ &E \\
\hline
182&91&91b1\\
\hline
183&61&61a1\\
\hline
195&65 &65a1\\
\hline
202&101&101a1\\
\hline
215&43&43a1\\
\hline
222&37&37a1\\
\hline
237&79&79a1\\
\hline
\end{tabular}
\begin{tabular}{|c|c|c|}
\hline
N& $d$ &E \\
\hline
249&83&83a1\\
\hline
262&131&131a1\\
\hline
267&89&89a1\\
\hline
273&91&91b1\\
\hline
303&101&101a1\\
\hline
305&61&61a1\\
\hline
395&79&79a1\\
\hline
\end{tabular}
\end{center}
\end{small}

To handle the remaining triples $(N,d,E)$ mentioned in the last tables, we study the degeneracy maps between quotient modular curves.

\section{Degeneracy maps:} 
Let $M$ be a positive integer such that $M| N$. For every positive divisor $d$ of $\frac{N}{M}$, we have $\iota_d \Gamma_0(N)\iota_d^{-1}\subset \Gamma_0(M)$, where $\iota_d:=\psmat{d}{0}{0}{1}$.
Hence $\iota_d$ induces a degeneracy map
$$\iota_{d,N,M}: X_0(N)\rightarrow X_0(M),$$
where $\iota_{d,N,M}$ acts on $\tau \in \mathbb{H}^*$ in the extended upper half plane as $\iota_{d,N,M}(\tau)=\iota_d\cdot \tau=d\tau$. For any positive integer $r$ with $r||M$, let $w_r^{(N)}$ denote the $r$-th (partial) Atkin-Lehner operator acting on $X_0(N)$ and $w_r^{(M)}$ denote the $r$-th (partial) Atkin-Lehner operator acting on $X_0(M)$. For any positive divisor $d$ of $\frac{N}{M}$ with $(d,r)=1$, a simple calculation shows that $\iota_d w_r^{(N)}\iota_d^{-1}=w_r^{(M)}$. Thus $\iota_{d,N,M}$ induces a mapping 
$$\iota_{d,N,M,r}: X_0(N)/\langle w_r^{(N)}\rangle\rightarrow X_0(M)/\langle w_r^{(M)}\rangle.$$
 Moreover, we have the following commutative diagram
\begin{equation}
\begin{tikzcd}
&X_0(N)\arrow[r,"\iota_{d,N,M}"''] \arrow[d, "w_r^{(N)}"'] &X_{0}(M)\arrow[d, "w_r^{(M)}"'']\\
&X_0(N)/\langle w_r^{(N)} \rangle \arrow[r, "\iota_{d,N,M,r}"] &X_0(M)/\langle w_r^{(M)}\rangle .
\end{tikzcd}
\end{equation}
By abuse of notations we write $w_r$ for $w_r^{(N)}$ and $w_r^{(M)}$. As an immediate consequence we obtain:
\begin{cor}
\label{Induced map corollary}
The modular curves $X_0^{+53}(106), X_0^{+61}(122), X_0^{+65}(130), X_0^{+79}(158)$, $X_0^{+83}(166)$, $X_0^{+89}(178), X_0^{+101}(202), X_0^{+131}(262)$ have infinitely many cubic points over $\Q$.
\end{cor}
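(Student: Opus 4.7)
The plan is to exhibit for each of the eight curves in the list a $\Q$-rational morphism of degree three to an elliptic curve of positive $\Q$-rank. Infinitely many cubic points then follow immediately by pulling back the infinite set $E(\Q)$, and this is in fact the only route to an infinite $\Gamma_3'(X,\Q)$ by Lemma \ref{main lemma for gonality >=4}.

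The key observation is that every pair $(N,d)$ in the list satisfies $N=2d$. I therefore apply the construction of this section with $M=d$, degeneracy parameter equal to the unique nontrivial divisor $2$ of $N/M$, and Atkin-Lehner involution $w_d$ on both source and target. In every case $d$ is odd, so the coprimality requirement $(2,d)=1$ holds, and the commutative diagram preceding the corollary yields a $\Q$-rational map
\[
\iota_{2,2d,d,d}\colon X_0^{+d}(2d)\longrightarrow X_0(d)/\langle w_d\rangle .
\]
The parent degeneracy $X_0(2d)\to X_0(d)$ has degree $[\Gamma_0(d):\Gamma_0(2d)]=3$, and since $w_d$ acts nontrivially on both curves and the map is $\langle w_d\rangle$-equivariant, a standard degree computation shows that the induced map on quotients still has degree three.

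It then remains to identify $X_0(d)/\langle w_d\rangle$ with an elliptic curve of positive $\Q$-rank for each $d\in\{53,61,65,79,83,89,101,131\}$. For the seven prime values this is classical: $X_0(p)/\langle w_p\rangle$ has genus one for $p$ in a well-known list containing all seven, and the resulting elliptic curve is $p\mathrm{a}1$, which has rank one over $\Q$. The composite value $d=65$ is the main obstacle and must be handled separately. The natural approach is to use the Jacobian decomposition $J_0(65)\sim E_{65\mathrm{a}1}\times E_{65\mathrm{b}1}\times A_3$ together with the fact that the signs of the functional equations of $65\mathrm{a}1$ (rank one) and $65\mathrm{b}1$ (rank zero) force $w_{65}$ to act as $+1$ on $E_{65\mathrm{a}1}$ and as $-1$ on $E_{65\mathrm{b}1}$; a Riemann-Hurwitz computation for the double cover $X_0(65)\to X_0(65)/\langle w_{65}\rangle$ using $g(X_0(65))=5$ together with the nonnegativity of the number of $w_{65}$-fixed points then excludes $w_{65}=+1$ on $A_3$, leaving $g(X_0(65)/\langle w_{65}\rangle)=1$ with Jacobian $E_{65\mathrm{a}1}$. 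Since this quotient carries the rational cusp, it is $\Q$-isomorphic to $65\mathrm{a}1$, which has rank one.

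With each $X_0(d)/\langle w_d\rangle$ so identified, and since the preliminary reductions in this section guarantee that $\mathrm{Gon}(X_0^{+d}(2d))\geq 4$ and that no $\Q$-rational degree $\leq 2$ map to an elliptic curve exists for any curve in the list, the degree-three map $\iota_{2,2d,d,d}$ combined with Lemma \ref{main lemma for gonality >=4} yields that $\Gamma_3'(X_0^{+d}(2d),\Q)$ is infinite, as claimed.
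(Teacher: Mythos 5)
Your proposal is correct and follows essentially the same route as the paper: the degeneracy map $X_0(2d)\to X_0(d)$ of degree $\psi(2d)/\psi(d)=3$ descends to a degree-$3$ map $X_0^{+d}(2d)\to X_0(d)/\langle w_d\rangle\cong d\mathrm{a}1$, an elliptic curve of positive rank, which is exactly the paper's argument. The only quibble is your justification for $d=65$: nonnegativity of the number of $w_{65}$-fixed points only bounds the quotient genus by $3$ and does not by itself exclude a $+1$-eigenvalue on the remaining factor, so one should instead count the fixed points (eight, via class numbers) or simply cite the standard fact $X_0(65)/\langle w_{65}\rangle\cong 65\mathrm{a}1$, which is all the paper does.
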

\begin{proof}
By the previous discussions, there are natural $\Q$-rational degree $3$ mappings
\begin{align*}
X_0(106)/w_{53}&\rightarrow X_0(53)/w_{53}\cong 53a1\\
X_0(122)/w_{61}&\rightarrow X_0(61)/w_{61}\cong 61a1\\
X_0(130)/w_{65}&\rightarrow X_0(65)/w_{65}\cong 65a1\\
X_0(158)/w_{79}&\rightarrow X_0(79)/w_{79}\cong 79a1\\
X_0(166)/w_{83}&\rightarrow X_0(83)/w_{83}\cong 83a1\\
X_0(178)/w_{89}&\rightarrow X_0(89)/w_{89}\cong 89a1\\
X_0(202)/w_{101}&\rightarrow X_0(101)/w_{101}\cong 101a1\\
X_0(262)/w_{131}&\rightarrow X_0(131)/w_{131}\cong 131a1.
\end{align*}
Since all the elliptic curves $53a1,61a1,65a1,79a1,83a1,89a1,101a1$ and $131a1$ have positive $\Q$-ranks, we conclude that all the modular curves mentioned in the statement of this corollary have infinitely many cubic points over $\Q$.
\end{proof}

We now introduce some notations.
For any curves $C,C^\prime$ over a field $k$, and any morphism $f:C\rightarrow C^\prime$ we define the following mappings
$$f_*:J(C)\rightarrow J(C^\prime)\ \mathrm{defiend\ by} \ f_*([\sum n_iP_i])=[\sum n_i f(P_i)], \ \mathrm{and}$$
$$f^*:J(C)\rightarrow J(C^\prime)\ \mathrm{defined\ by} \ f^*([\sum n_iP_i])=[\sum n_i f^{-1}(P_i)],$$
where $J(C)$ (resp., $J(C^\prime)$) denote the Jacobian of $C$ (resp., $C^\prime$). It is easy to check that $f_*\circ f^*=[deg(f)]$.

For any positive integer $M$, we use the notation $[E,r]_M$ to denote a pair $(E,r)$ where $E$ is an elliptic curve of conductor $M$ and $r||M$ is a positive integer such that $w_r$ acts as $+1$ on $E$, furthermore if $r<M$, then at least one of the following is true:
\begin{enumerate}
\item $E$ has no non-trivial $2$-torsion point,
\item $w_r$ has a fixed point on $X_0(M)$.
\end{enumerate}
For a pair $[E,r]_M$, if $f: X_0(M)\rightarrow E$ is the modular parametrization,  then by Lemma \ref{lem1} and Lemma \ref{descent}, $f$ induces a mapping 
\begin{equation}
\label{induced modular parametrization for quotient curve}
f_r: X_0(M)/\langle w_r\rangle \rightarrow E,
\end{equation}
and the following diagram commutes:
\begin{equation}
\label{level M modular curve commutative diagram with r-th Atkin-Lehner operator}
\begin{tikzcd}
&X_0(M)\arrow[r,"f"''] \arrow[d, "w_r"'] &E\\
&X_0(M)/\langle w_r\rangle \arrow[ur, "f_r"'].
\end{tikzcd}
\end{equation}
Observe that if $E^\prime$ is any elliptic curve isogenous to $E$ and $f^\prime: X_0(M)/\langle w_r\rangle\rightarrow E^\prime$ be a mapping, then $f^\prime$ factors through $f_r$.
Also, for any positive divisor $d$ of $\frac{N}{M}$ with $(d,r)=1$, we have the following commutative diagram:
\begin{equation}
\label{commutative diagram with degeneracy map for elliptic curve and modular curve}
\begin{tikzcd}
&X_0(N)\arrow[r,"\iota_{d,N,M}"''] \arrow[d, "w_r^{(N)}"'] &X_{0}(M)\arrow[d, "w_r^{(M)}"']\arrow[r,"f"''] &E\\
&X_0(N)/\langle w_r\rangle \arrow[r, "\iota_{d,N,M,r}"] &X_0(M)/\langle w_r\rangle\arrow[ur, "f_r"'] .
\end{tikzcd}
\end{equation}
Let $J_0(N)^{w_r}$ (resp., $J_0(M)^{w_r}$) denote the Jacobian of $X_0(N)/\langle w_r\rangle$ (resp., $X_0(M)/\langle w_r\rangle$) and $n$ be the number of divisors of $\frac{N}{M}$. Considering the pushforward maps of \eqref{commutative diagram with degeneracy map for elliptic curve and modular curve}, we obtain the commutative diagram:
\begin{equation}
\label{commutative diagram for Jacobians with degeneracy map for elliptic curve and modular curve}
\begin{tikzcd}
&J_0(N)\arrow[r,"\iota_{d,N,M,*}"''] \arrow[d, "w_{r,*}^{(N)}"'] &J_{0}(M)\arrow[d, "w_{r,*}^{(M)}"']\arrow[r,"f_*"''] &E\\
&J_0(N)^{w_r} \arrow[r, "\iota_{d,N,M,r,*}"] &J_0(M)^{w_r}\arrow[ur, "f_{r,*}"']. 
\end{tikzcd} 
\end{equation}
Note that there is a natural mapping $\delta_{N,M,r}:J_0(N)^{w_r}\rightarrow (J_0(M)^{w_r})^n$ defined by
$$\delta_{N,M,r}:=(\iota_{1,N,M,r,*},\ldots, \iota_{\frac{N}{M},N,M,r,*}).$$
We recall the notion of optimal $B$-isogenous quotient (cf. \cite[Definition 3.2]{DO23}).
\begin{dfn}
    Let $A,B$ be abelian varieties over a field $k$ with $B$ simple. An abelian variety $A^\prime$ together with a quotient map $\pi: A\rightarrow A^\prime$ is an optimal $B$-isogenous quotient if $A^\prime$ is isogenous to $B^n$ for some integer $n$ and every morphism $A\rightarrow B^\prime$ with $B^\prime$ isogenous to $B^m$ for some integer $m$ uniquely factors through $\pi$.
\end{dfn}

{
\begin{prop}
Suppose $N<408$ is a positive integer. For a positive divisor $M$ of $N$ consider a pair $[E,r]_M$, where $E$ is a strong Weil curve over $\Q$ of positive rank and conductor $M$.
Let $f_r: X_0(M)/\langle w_r\rangle \rightarrow E$ be the mapping induced by the modular parametrization $f: X_0(M)\rightarrow E$. 
Consider the mapping $\xi_{E,N,r}:=(f_{r,*})^n\circ \delta_{N,M,r}: J_0(N)^{w_r}\rightarrow E^n$, where $n$ denotes the number of divisors of $\frac{N}{M}$.
 Then $\xi_{E,N,r}^\lor: E^n\rightarrow J_0(N)^{w_r}$ has a trivial kernel (where $\xi_{E,N,r}^\lor$ denotes the dual mapping of $\xi_{E,N,r}$). Hence $\xi_{E,N,r}^\lor: E^n\rightarrow J_0(N)^{w_r}$ is a maximal $E$-isogenous abelian subvariety and $\xi_{E,N,r}: J_0(N)^{w_r}\rightarrow E^n$ is an optimal $E$-isogenous quotient of $J_0(N)^{w_r}$.
\end{prop}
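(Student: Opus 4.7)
The approach is to reduce the claim that $\xi^\lor := \xi_{E,N,r}^\lor$ has trivial (finite) kernel to a linear independence statement about oldforms, by passing to cotangent spaces via the Eichler-Shimura identification $\mathrm{Cot}(J_0(L)) \cong S_2(\Gamma_0(L))$. A morphism of abelian varieties has finite kernel iff its induced Lie algebra map is injective, equivalently the induced cotangent map is surjective. Under the identifications $\mathrm{Cot}(J_0(N)^{w_r}) \cong S_2(\Gamma_0(N))^{w_r=+1}$ and $\mathrm{Cot}(E) \cong \mathbb{C}\cdot f$ (where $f$ is the newform attached to $E$), the plan is to compute this cotangent map explicitly and then invoke Atkin-Lehner theory.

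First I would unwind $\xi^\lor$ using duality. Since the dual of a pushforward on Jacobians is the pullback, and since the dual of a diagonal-type map into a product is a sum on the dual side, one obtains
$\xi^\lor = \sum_{d \mid N/M} \iota_{d,N,M,r}^* \circ f_r^*: E^n \to J_0(N)^{w_r}$.
On cotangents, $f_r^*$ sends $f$ to $f$ (now viewed in $S_2(\Gamma_0(M))^{w_r=+1}$), while $\iota_{d,N,M,r}^*$ sends $g(\tau)$ to $d\cdot g(d\tau)$. Hence the cotangent realization of $\xi^\lor$ reads $(c_1, \ldots, c_n) \mapsto \sum_i c_i\, d_i\, f(d_i\tau)$, and triviality of $\ker \xi^\lor$ is equivalent to the linear independence of the $n$ oldforms $\{f(d\tau) : d \mid N/M\}$ in $S_2(\Gamma_0(N))^{w_r=+1}$.

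Next I would verify that each $f(d\tau)$ indeed lies in the $w_r = +1$ eigenspace. Because $r \mid M$ and $d \mid N/M$ force $(d,r) = 1$, the commutation $\iota_d w_r^{(N)} \iota_d^{-1} = w_r^{(M)}$ recalled at the beginning of Section 3 translates, via a slash-operator computation, into the equality of the $w_r^{(N)}$-eigenvalue on $f(d\tau)$ with the $w_r^{(M)}$-eigenvalue on $f$, which is $+1$ by the definition of the pair $[E,r]_M$. The classical Atkin-Lehner theory of oldforms then guarantees that $\{f(d\tau) : d \mid N/M\}$ are linearly independent in $S_2(\Gamma_0(N))$ and \emph{a fortiori} in its $w_r=+1$ subspace. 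This yields the injectivity of the cotangent map and hence the triviality of $\ker \xi^\lor$, so $\xi^\lor(E^n)$ is an abelian subvariety of $J_0(N)^{w_r}$ isogenous to $E^n$.

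For the maximality and optimal-quotient statements, I would use that the conductor is an invariant of the $\mathbb{Q}$-isogeny class of $E$: the entire $E$-isotypic component of $J_0(N)$ is (up to isogeny) the $n$-dimensional piece spanned by these oldforms, and because $w_r$ acts there as $+1$, it descends isogenously onto $\xi^\lor(E^n) \subset J_0(N)^{w_r}$. Maximality of $\xi^\lor(E^n)$ as an $E$-isogenous subvariety follows, and optimality of $\xi$ is its dual statement. The main technical obstacle is the slash-operator bookkeeping in verifying the $+1$ eigenvalue on each oldform; the hypothesis $N<408$ does not appear intrinsic to the argument and presumably serves only to cover the range relevant to the main theorem.
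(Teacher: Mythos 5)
Your cotangent-space computation is sound as far as it goes, but it proves strictly less than the proposition asserts. Passing to cotangent spaces and invoking the linear independence of the oldforms $f(d\tau)$, $d\mid N/M$, shows only that $\xi_{E,N,r}^\lor$ has \emph{finite} kernel (equivalently, that $\xi_{E,N,r}^\lor(E^n)$ is $n$-dimensional). The proposition claims the kernel is \emph{trivial}, and this stronger statement is what the paper actually needs downstream: it is what makes $\xi_{E,N,r}$ an optimal $E$-isogenous quotient in the sense of the definition quoted from \cite{DO23} (unique factorization of \emph{every} morphism to a power of $E$), and hence what justifies the later claim that the maps $f_M\circ d_i$ form a $\Z$-basis of $\Hom_{\Q}(J_0(N)^{w_r},E)$ rather than merely a finite-index subgroup, on which the whole quadratic-form sieve of Section 4 rests. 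Upgrading ``finite'' to ``trivial'' requires controlling the kernels of the pullbacks $\iota_{d,N,M}^*$ and the pairwise intersections of the images of the various $\iota_{d}^*\circ f^\lor$ inside $J_0(N)$; your proposal does not address this, and it is precisely where the real work lies. Relatedly, your closing remark that the hypothesis $N<408$ ``does not appear intrinsic to the argument'' is mistaken: it is exactly the hypothesis of \cite[Proposition 3.9]{DO23}, which is the source of the trivial-kernel statement.

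The paper's own proof is a two-line reduction to that result: from the commutative diagram \eqref{commutative diagram for Jacobians with degeneracy map for elliptic curve and modular curve} one has $\xi_{E,N}=\xi_{E,N,r}\circ w_{r,*}^{(N)}$, hence $\xi_{E,N}^\lor=(w_{r,*}^{(N)})^\lor\circ\xi_{E,N,r}^\lor$; since $\xi_{E,N}^\lor$ has trivial kernel by \cite[Proposition 3.9]{DO23} (valid for $N<408$), so does $\xi_{E,N,r}^\lor$, and the maximality and optimality statements are likewise inherited. If you want a self-contained argument along your lines, you would in effect have to reprove the Derickx--Orli\'c proposition, including the integral (not just up-to-isogeny) analysis of $\ker\xi_{E,N}^\lor$; as written, your proof establishes the isogeny-level statement but not the proposition.
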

\begin{proof}
Consider the mapping $\xi_{E,N}:=(f_*)^n\circ (\iota_{1,N,M,*},\ldots, \iota_{\frac{N}{M},N,M,*}): J_0(N)\rightarrow E^n$ (cf. \cite[Definition 3.8]{DO23}).
 From the commutative diagram \eqref{commutative diagram for Jacobians with degeneracy map for elliptic curve and modular curve} we get
\begin{equation}
\xi_{E,N}=\xi_{E,N,r}\circ w^{(N)}_{r,*}.
\end{equation}
Hence $\xi_{E,N}^\lor=(w_{r,*}^{(N)})^\lor \circ \xi_{E,N,r}^\lor$. Since $\xi_{E,N}^\lor$ has trivial kernel (cf. \cite[Proposition 3.9]{DO23}), we conclude that $\xi_{E,N,r}^\lor$ has trivial kernel. The remaining statements follows from \cite[Proposition 3.9]{DO23} and the commutative diagram \eqref{commutative diagram for Jacobians with degeneracy map for elliptic curve and modular curve}.
\end{proof}
}
Let $E$ be a strong Weil curve of conductor $M$ ($M| N$) and odd $\Q$-rank, with $w_M$ acts as $+1$ on $E$, and $f_M:X_0(M)/\langle w_M\rangle\rightarrow E$  be the mapping induced by the modular parametrization of $E$. Since $\xi_{E,r,N}:J_0(N)^{w_M}\rightarrow E^n$ is an $E$-isogenous optimal quotient when $N<408$, every mapping  $J_0(N)^{w_M}\rightarrow E$ uniquely factors through $E^n$. Therefore we get that the maps $f_M\circ d_i$, where $d_i$ runs over the degeneracy maps $X_0(N)/\langle w_M\rangle\rightarrow X_0(M)/\langle w_M\rangle$, form a basis for $\mathrm{Hom}_\Q(J_0(N)^{w_M},E)\cong \mathrm{Hom}_\Q(E^n,E)\cong \Z^n$.


\subsection{Degree Pairing} We recall the definition of degree pairing from \cite{DO23}.
\begin{dfn}\cite[Definition 2.2]{DO23}
Let $C,E$ be curves over a field $k$, where $E$ is an elliptic curve. The degree pairing is defined on $\Hom(C,E)$ as
\begin{align*}
\langle \mbox{}\char`\_,\char`\_\mbox{}\rangle: \Hom(C,E)\times \Hom(C,E) &\rightarrow 
\mathrm{End}(Jac(E))\\
f,g &\rightarrow f_*\circ g^*.
\end{align*}
Note that with this notation we have $\langle f,f \rangle=[\deg (f)]$ for $f\in \Hom(C,E)$. If $P\in C(k)$, then we can define the degree pairing on $\Hom(J(C),E)$ as follows
\begin{align*}
\langle \mbox{}\char`\_,\char`\_\mbox{}\rangle: \Hom(J(C),E)\times \Hom(J(C),E) &\rightarrow 
\mathrm{End}(J(E))\\
f,g &\rightarrow (f\circ f_P)_*\circ (g\circ g_P)^*,
\end{align*}
where the mapping $f_P$ (similar for $g_P$) is defined as
\begin{align*}
f_P: C&\rightarrow J(C)\\
x&\rightarrow [x-P].
\end{align*}
\end{dfn}

\begin{prop}\label{computing th degree pairing}
Assume that $N$ is a square free positive integer and $M|N$.
 Let $E$ be an elliptic curve of conductor $M$ with newform $\sum_{n=1}^{\infty} a_nq^n$ such that $w_M^{(M)}$ acts as $+1$ on $E$. Suppose that $f_M:X_0(M)/\langle w_M^{(M)}\rangle\rightarrow E$ be the mapping induced by the modular parametrization $f:X_0(M)\rightarrow E$. For any positive divisors $d_1,d_2$ of $\frac{N}{M}$ we define $a:=a_{\frac{d_1d_2}{gcd(d_1,d_2)^2}}$, then we have the following equality in $\mathrm{End}(Jac(E))$:         
     $$[2]\langle f_M\circ \iota_{d_1,N,M,M},f_M\circ 
\iota_{d_2,N,M,M}\rangle=
               [2][deg(f_M)]\Big[a\frac{\psi(N)}{\psi(\frac{M \cdot lcm(d_1,d_2)}{gcd(d_1,d_2)})}\Big],$$
               where $\psi(s)=s\prod_{p|s}(1+\frac{1}{p})$.
\end{prop}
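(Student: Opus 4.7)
The plan is to reduce the pairing on the quotient curve to the analogous pairing on $X_0(N)$ (without the Atkin--Lehner quotient), for which the formula is proven in \cite{DO23}, and then pick up a factor of $[2]$ coming from the degree of the quotient map $X_0(N)\to X_0(N)/\langle w_M\rangle$.

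First I would record the functorial identities I need. Let $\pi_N\colon X_0(N)\to X_0(N)/\langle w_M\rangle$ and $\pi_M\colon X_0(M)\to X_0(M)/\langle w_M\rangle$ be the natural quotients, each of degree $2$. Since $N$ is square-free, every positive divisor $d$ of $N/M$ is coprime to $M$, so the commutative diagram preceding the definition of $\iota_{d,N,M,r}$ gives
$$\iota_{d,N,M,M}\circ\pi_N \;=\; \pi_M\circ\iota_{d,N,M}.$$
The defining factorization \eqref{induced modular parametrization for quotient curve} of $f_M$ reads $f=f_M\circ\pi_M$, and in particular $\deg(f)=2\deg(f_M)$. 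Combining these, for $i=1,2$,
$$f\circ\iota_{d_i,N,M} \;=\; f_M\circ\iota_{d_i,N,M,M}\circ\pi_N.$$

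Next I would exploit this factorization at the level of Jacobians. Write $h_i:=f\circ\iota_{d_i,N,M}$ and $g_i:=f_M\circ\iota_{d_i,N,M,M}$. Using $(\phi\circ\psi)_*=\phi_*\circ\psi_*$, $(\phi\circ\psi)^*=\psi^*\circ\phi^*$, and the identity $(\pi_N)_*\circ(\pi_N)^* = [\deg\pi_N]=[2]$, one computes
$$\langle h_1,h_2\rangle \;=\; (g_1)_*\circ(\pi_N)_*\circ(\pi_N)^*\circ(g_2)^* \;=\; [2]\,\langle g_1,g_2\rangle.$$
Now I would invoke the analogous degeneracy-pairing formula of Derickx--Orli\'c \cite{DO23} applied directly to the strong Weil parametrization $f\colon X_0(M)\to E$ and the degeneracy maps $\iota_{d_i,N,M}\colon X_0(N)\to X_0(M)$; this yields
$$\langle h_1,h_2\rangle \;=\; [\deg(f)]\Big[a\cdot\frac{\psi(N)}{\psi(M\cdot\mathrm{lcm}(d_1,d_2)/\gcd(d_1,d_2))}\Big].$$
Substituting $\deg(f)=2\deg(f_M)$ on the right and $\langle h_1,h_2\rangle=[2]\langle g_1,g_2\rangle$ on the left produces the stated identity.

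The main (and essentially the only) substantive input is the cited formula of \cite{DO23}, which rests on the Hecke-operator identity $T_n T_m=\sum_{d\mid\gcd(n,m)} d\,T_{nm/d^2}$ translated into a statement about the correspondences $\iota_{d,N,M}^{\ \!*}\circ\iota_{d',N,M,*}$ acting on the newform $\sum a_n q^n$. The remaining care required is bookkeeping: one must check that the hypothesis $(d,M)=1$ needed for the diagram defining $\iota_{d,N,M,r}$ holds (automatic since $N$ is square-free), that $w_M$ acts as $+1$ on $f_E$ so that $f_M$ actually exists via Lemma~\ref{lem1} or Lemma~\ref{descent}, and that the $[2]$ on both sides is kept rather than cancelled (it cannot be cancelled in $\mathrm{End}(\mathrm{Jac}(E))$ in general, which is precisely why the statement carries the prefactor $[2]$).
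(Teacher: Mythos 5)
Your proposal is correct and follows essentially the same route as the paper: both proofs precompose the maps $f_M\circ\iota_{d_i,N,M,M}$ with the degree-$2$ quotient $X_0(N)\to X_0(N)/\langle w_M\rangle$, use $(\pi_N)_*\circ(\pi_N)^*=[2]$ together with the commutative diagram to identify the result with $\langle f\circ\iota_{d_1,N,M},\,f\circ\iota_{d_2,N,M}\rangle$, and then apply \cite[Theorem 2.13]{DO23} and $\deg(f)=2\deg(f_M)$. The only difference is notational (you write $\pi_N$ where the paper reuses the symbol $w_M^{(N)}$ for the quotient map).
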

%
\begin{proof}
     By the definition of the pairing, we have
     \begin{align*}
      \langle f_M\circ \iota_{d_1,N,M,M}\circ 
w_M^{(N)},f_M\circ\iota_{d_2,N,M,M}&\circ w_M^{(N)} \rangle=
                   (f_M\circ \iota_{d_1,N,M,M}\circ w_M^{(N)})_* \circ 
(f_M\circ \iota_{d_2,N,M,M}\circ w_M^{(N)})^*\\
&= (f_M)_*\circ
                   (\iota_{d_1,N,M,M})_* \circ (w_M^{(N)})_*\circ 
(w_M^{(N)})^*\circ (\iota_{d_2,N,M,M})^* \circ (f_M)^*\\
&= (f_M)_*\circ
                   (\iota_{d_1,N,M,M})_*\circ  [deg(w_M^{(N)})] \circ 
(\iota_{d_2,N,M,M})^* \circ (f_M)^*\\
&= (f_M)_*\circ
                   (\iota_{d_1,N,M,M})_*\circ  [2] \circ 
(\iota_{d_2,N,M,M})^* \circ (f_M)^*\\
&= [2]\circ (f_M)_*\circ
                   (\iota_{d_1,N,M,M})_*\circ   
(\iota_{d_2,N,M,M})^* \circ (f_M)^*\\
&= [2]\circ \langle f_M\circ 
\iota_{d_1,N,M,M},f_M\circ\iota_{d_2,N,M,M} \rangle
     \end{align*}
     On the other hand, from the diagram \eqref{commutative diagram with degeneracy map for elliptic curve and modular curve} we have the following equalities inside $\mathrm{End}(E)$:
     \begin{align*}
         \langle f_M\circ \iota_{d_1,N,M,M}\circ w_{M}^{(N)},f_M\circ 
\iota_{d_2,N,M,M}\circ w_{M}^{(N)} \rangle &= \langle f_M\circ w_M^{(M)}\circ \iota_{d_1,N,M},f_M\circ 
w_{M}^{(M)} \circ \iota_{d_2,N,M} \rangle\\
&=\langle f\circ \iota_{d_1,N,M},f\circ 
\iota_{d_2,N,M}\rangle.
     \end{align*}
     By \cite[Theorem 2.13]{DO23}, we have 
     \begin{equation*}
         \langle f\circ \iota_{d_1,N,M},f\circ \iota_{d_2,N,M}\rangle = \Big[a\frac{\psi(N)}{\psi(\frac{M \cdot lcm(d_1,d_2)}{gcd(d_1,d_2)})}deg(f)\Big]= \Big[a\frac{\psi(N)}{\psi(\frac{M \cdot lcm(d_1,d_2)}{gcd(d_1,d_2)})}deg(f_M)\Big][2]
     \end{equation*}
     Therefore we conclude that 
     \begin{equation*}
         [2]\circ \langle f_M\circ 
\iota_{d_1,N,M,M},f_M\circ\iota_{d_2,N,M,M} \rangle=\langle f\circ \iota_{d_1,N,M},f\circ 
\iota_{d_2,N,M}\rangle=[2][deg(f_M)]\Big[a\frac{\psi(N)}{\psi(\frac{M \cdot lcm(d_1,d_2)}{gcd(d_1,d_2)})}\Big].
     \end{equation*}
     This completes the proof.
\end{proof}

We now explain via some examples how to use the above recipe to study the triples $(N,d,E)$.

\section{The remaining cases from \S\ref{General considerations section}}
\label{Quadratic forms for the remaining cases section}
In this section we give complete explanation the cases $(222,37,37a1)$ and $(195,65,65a1)$. The arguments for the other cases are similar to these.
\subsection{The curve $X_0(222)/\langle w_{37}\rangle$:}
We want to check whether there is a $\Q$-rational degree $3$ mapping $X_0(222)/\langle w_{37}\rangle\rightarrow 37a1$. The modular parametrization $f:X_0(37)\rightarrow 37a1$ has degree $2$ and $w_{37}$ acts as $+1$ on $37a1$. By Lemma \ref{descent}, $f$ factors through $X_0(37)/\langle w_{37}\rangle$ and the induced map $f_{37}:X_0(37)/\langle w_{37}\rangle \rightarrow 37a1$ has degree 1 (note that $X_0(37)/\langle w_{37}\rangle \cong 37a1$). The degeneracy maps are $\iota_{1,222,37,37}, \iota_{2,222,37,37}, \iota_{3,222,37,37}$ and $\iota_{6,222,37,37}$ (note that all these maps has degree $12$). Moreover, by Proposition \ref{computing th degree pairing}, we have
\begin{align*}
[2]\langle f_{37}\circ \iota_{1,222,37,37}, f_{37}\circ \iota_{2,222,37,37} \rangle&= [2][a_2. \psi(3).deg(f_{37})]=[2][-2.4.1]=[-16],\\
[2]\langle f_{37}\circ \iota_{1,222,37,37}, f_{37}\circ \iota_{3,222,37,37} \rangle&= [2][a_3. \psi(2).deg(f_{37})]=[2][-3.3.1]=[-18],\\
[2]\langle f_{37}\circ \iota_{1,222,37,37}, f_{37}\circ \iota_{6,222,37,37} \rangle&= [2][a_6. \psi(1).deg(f_{37})]=[2][6.1.1]=[12],\\
[2]\langle f_{37}\circ \iota_{2,222,37,37}, f_{37}\circ \iota_{3,222,37,37} \rangle&= [2][a_6. \psi(1).deg(f_{37})]=[2][6.1.1]=[12],\\
[2]\langle f_{37}\circ \iota_{2,222,37,37}, f_{37}\circ \iota_{6,222,37,37} \rangle&= [2][a_3. \psi(2).deg(f_{37})]=[2][-3.3.1]=[-18],\\
[2]\langle f_{37}\circ \iota_{3,222,37,37}, f_{37}\circ \iota_{6,222,37,37} \rangle&= [2][a_2. \psi(3).deg(f_{37})]=[2][-2.4.1]=[-16].
\end{align*}
The maps $f_{37}\circ \iota_{1,222,37,37}, f_{37}\circ \iota_{2,222,37,37}, f_{37}\circ \iota_{3,222,37,37}$ and $f_{37}\circ \iota_{6,222,37,37}$ form a basis for $\Hom_\Q(J_0(222)^{w_{37}},37a1)$. If $\varphi: X_0(222)/\langle w_{37}\rangle\rightarrow 37a1$ is a $\Q$-rational mapping, then we can write
\begin{equation}
\label{basis combination for 222 mod 37}
\varphi= x_1(f_{37}\circ \iota_{1,222,37,37})+ x_2(f_{37}\circ \iota_{2,222,37,37})+ x_3(f_{37}\circ \iota_{3,222,37,37}) + x_4(f_{37}\circ \iota_{6,222,37,37}),
\end{equation}
for some $x_1,x_2,x_3,x_4\in \Z$. From \eqref{basis combination for 222 mod 37} we have
\begin{align*}
[\deg \varphi]=[12x_1^2]+[12x_2^2]+[12x_3^2]+ &[12x_4^2]+[-16x_1x_2]+[-18x_1x_3]+[12x_1x_4]+[12x_2x_3]\\
&+[-18x_2x_4]+[-16x_3x_4],
\end{align*}
where $``[a]"$ denotes the multiplication by $a$-mapping on the elliptic curve $E=37a1$. Hence we must have
\begin{equation}
\deg\varphi= 12x_1^2+12x_2^2+12x_3^2+12x_4^2 -16x_1x_2-18x_1x_3+12x_1x_4+12x_2x_3-18x_2x_4-16x_3x_4.
\end{equation}
This shows that $\deg \varphi$ is of the form $2g(x_1,x_2,x_3,x_4)$, so it can not take the value $3$. Consequently, the triple $(222,37,37a1)$ is not admissible.

\subsection{The curve $X_0(195)/w_{65}$:} We want to check whether there is a $\Q$-rational degree $3$ mapping $X_0(195)/w_{65}\rightarrow 65a1$. The modular parametrization $f:X_0(65)\rightarrow 65a1$ has degree 2. By Lemma \ref{descent}, $f$ factors through $X_0(65)/\langle w_{65}\rangle$ and the induced map $f_{65}:X_0(65)/w_{65}\rightarrow 65a1$ has degree 1, since $X_0(65)/w_{65}\cong 65a1$. Both the degeneracy maps $\iota_{1,195,65,65}$ and $\iota_{3,195,65,65}$ has degree $4$. Moreover, we have 
$$[2]\langle f_{65}\circ \iota_{1,195,65,65}, f_{65}\circ \iota_{3,195,65,65} \rangle= [2][a_3. \frac{\psi(195)}{\psi(65.3)} .deg(f_{65})]=[2][-2.1.1]=[-4].$$
The maps $f_{65}\circ \iota_{1,195,65,65}, f_{65}\circ \iota_{3,195,65,65}$ form a basis for $\Hom_\Q(J_0(195)^{w_{65}},65a1)$. If $\varphi: X_0(195)/w_{65}\rightarrow 65a1$ is $\Q$-rational mapping then we can write
$$\varphi=x_1(f_{65}\circ \iota_{1,195,65,65})+ x_2(f_{65}\circ \iota_{3,195,65,65}).$$
Thus we have
\begin{align*}
[\deg(\varphi)]&=[x_1^2\deg(f_{65}\circ \iota_{1,195,65,65})] + [2x_1x_2] \langle f_{65}\circ \iota_{1,195,65,65}, f_{65}\circ \iota_{3,195,65,65} \rangle  +[x_2^2\deg(f_{65}\circ \iota_{3,195,65,65})],\\
&=[4x_1^2]+[x_1x_2][-4]+[4x_2^2],
\end{align*}
where $``[a]"$ denotes the multiplication by $a$-mapping on the elliptic curve $E=65a1$.

Hence we must have 
\begin{equation}
\label{degree binary form for 195 mod 65}
\deg(\varphi)=4x_1^2-4x_1x_2+4x_2^2.
\end{equation}
From \eqref{degree binary form for 195 mod 65}, we see that $\deg(\varphi)$ can not take the value $3$.
Consequently, $\Hom_\Q(J_0(195)^{w_{65}},65a1)$ has no element of order 3. Thus there is no $\Q$-rational degree $3$-mapping $X_0(195)/w_{65}\rightarrow 65a1$.

The quadratic forms for the other remaining cases are given in the following table. It is clear from the following table that the triples $(N,d,E)$ appearing in the following table are not admissible. Consequently, the curves $X_0^{+d}(N)$ has finitely many cubic points over $\Q$ for $N,d$ appearing in the following table.
\begin{center}
\begin{longtable}{|c|c|c|c|c|c|}
\hline
N& $d$ &E & g($X_0^{+d}(N)$) & Quadratic Form \\
\hline
129 & 43 & 43a1&7& $4x_1^2-4x_1x_2+4x_2^2$ \\
\hline
182 & 91 & 91b1&10&$6x_1^2+6x_2^2$\\
\hline
183 & 61 & 61a1&10&$4x_1^2-4x_1x_2+4x_2^2$\\
\hline
195 & 65 & 65a1 &9&$4x_1^2-4x_1x_2+4x_2^2$ \\
\hline
215&43&43a1&11 &$6x_1^2-8x_1x_2+6x_2^2$\\
\hline
222 & 37 &37a1&18&$2\cdot g(x_1,x_2,x_3,x_4)$\\
\hline
237&79&79a1&13&$4x_1^2-2x_1x_2+4x_2^2$\\
\hline
249 & 83 & 83a1&8&$4x_1^2-2x_1x_2+4x_2^2$ \\
\hline
267 & 89 & 89a1&9&$4x_1^2-2x_1x_2+4x_2^2$\\
\hline
273&91&91b1&17&$8x_1^2-8x_1x_2+8x_2^2$\\
\hline
303 & 101 & 101a1&10&$4x_1^2-4x_1x_2+4x_2^2$ \\
\hline
305&61&61a1&12&$6x_1^2-6x_1x_2+6x_2^2$\\
\hline
395 & 79 & 79a1&15 &$6x_1^2-6x_1x_2+6x_2^2$\\
\hline
\caption{Quadratic forms for remaining cases}
\label{second remaining cases}
\end{longtable}
\end{center}
\section{Hyperelliptic cases}

In this section we deal with the pairs $(N,w_d)$ such that $X_0(N)/\langle w_d\rangle$ is hyperelliptic. We first consider the hyperelliptic curves of genus $2$.
\begin{thm}
The curve $X_0(N)/\langle w_d\rangle$ has infinitely many cubic points over $\Q$ for the following pairs of $(N,w_d)$:
\begin{align*}
&(30,w_2),(30,w_3),(30,w_{10}),(33,w_3),(35,w_7),(39,w_{13}),(42,w_3),
(42,w_6),(42,w_{21}),(57,w_3),\\
&(58,w_{29}),(66,w_{11}),(70,w_{35}),(78,w_{39}),
(142,w_{71}).
\end{align*}
\end{thm}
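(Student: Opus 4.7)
My plan is to produce, for each of the fifteen listed pairs $(N,w_d)$, a $\Q$-rational morphism $\phi\colon C\to\mathbb{P}^1$ of degree $3$, where $C=X_0(N)/\langle w_d\rangle$. Granted such a $\phi$, a standard fiber argument delivers infinitely many cubic points: for any $t\in\mathbb{P}^1(\Q)$, $\phi^{-1}(t)$ is an effective $\Q$-rational divisor of degree $3$ on $C$, and since $g_C=2$ implies $C(\Q)$ is finite by Faltings, only finitely many $t$ lie in $\phi(C(\Q))$. Omitting also the finite branch locus of $\phi$, the remaining (infinitely many) rational $t$ yield fibers that are reduced degree-$3$ $\Q$-divisors with empty rational support, and such a divisor is necessarily a single closed point whose residue field is cubic over $\Q$. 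Distinct $t$ give disjoint fibers, so this produces infinitely many distinct elements of $\Gamma_3'(C,\Q)$.

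For the construction of $\phi$, I first read off from Table~\ref{Hyperelliptic values} that each of the fifteen curves has genus $2$ and I recall that each carries a $\Q$-rational cusp. Pick a non-Weierstrass point $P_0\in C(\Q)$. Riemann--Roch gives $h^0(3P_0)-h^0(K-3P_0)=3-g_C+1=2$, and $\deg(K-3P_0)<0$ forces $h^0(3P_0)=2$. I would then argue that $|3P_0|$ is base-point free: any base point $Q$ must lie in the support of every element of $|3P_0|$, in particular in the support $\{P_0\}$ of $3P_0$, forcing $Q=P_0$; and $P_0$ itself is a base point iff $h^0(2P_0)=h^0(3P_0)=2$, which on a genus-$2$ curve occurs iff $2P_0\sim K$, i.e.\ iff $P_0$ is a Weierstrass point, contrary to the choice of $P_0$. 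Hence $|3P_0|$ is a base-point-free $\Q$-rational $g^1_3$, defining the desired degree-$3$ morphism $\phi$.

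The step I expect to be the main obstacle is verifying, for each of the fifteen curves, the existence of a non-Weierstrass $\Q$-rational point. On a genus-$2$ curve only six points are Weierstrass over $\overline{\Q}$, while each curve in the list has several $\Q$-rational cusps, so heuristically at least one cusp is non-Weierstrass; still, this must be confirmed curve-by-curve using the explicit models obtainable from the \texttt{MAGMA} code at the URL in the introduction. Should some curve turn out to have every $\Q$-rational cusp Weierstrass, I would replace $|3P_0|$ by $|P_0+P_1+P_2|$ for three distinct $\Q$-rational points, no two of which are hyperelliptic conjugates of one another; the Riemann--Roch and base-point analysis is essentially the same (a base point among $\{P_0,P_1,P_2\}$ would force $P_i+P_j\sim K$, i.e.\ $P_j=\iota P_i$, which is excluded), and this yields the required $g^1_3$ in all remaining cases.
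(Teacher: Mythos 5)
Your argument is correct and follows essentially the same route as the paper: the paper verifies by a \texttt{MAGMA} search that each of these genus-$2$ curves has at least three rational points and then invokes \cite[Lemma 2.1]{JKS04} to obtain a $\Q$-rational degree-$3$ map to $\mathbb{P}^1$, which is precisely the statement your Riemann--Roch construction of a base-point-free $g^1_3$ (via $|3P_0|$ for a non-Weierstrass rational point, or $|P_0+P_1+P_2|$ otherwise) proves. The only outstanding step in both treatments is the curve-by-curve computational verification that suitable rational points exist --- which you correctly flag --- and your fiber argument deducing infinitely many cubic points from the trigonal map is sound.
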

\begin{proof}
Using the MAGMA code ``\texttt{\#Points(SimplifiedModel(X0NQuotient(N,[d])):Bound:=10)}", we see that for the above mentioned pairs of $(N,w_d)$, the curve $X_0(N)/\langle w_d \rangle$ has atleast three rational points over $\Q$. From \cite[Lemma 2.1]{JKS04}, we conclude that for each of these cases there is a $\Q$-rational degree $3$ mapping $X_0(N)/\langle w_d\rangle\rightarrow \mathbb{P}^1$. Consequently, $X_0(N)/\langle w_d\rangle$ has infinitely many cubic points over $\Q$.
\end{proof}
The remaining genus two cases are $X_0(38)/\langle w_2\rangle$ and $X_0(87)/\langle w_{29}\rangle$.
\begin{thm}
Both the sets $\Gamma_3'(X_0(38)/\langle w_2\rangle, \Q)$ and $\Gamma_3'(X_0(87)/\langle w_{29}\rangle, \Q)$ are infinite.
\end{thm}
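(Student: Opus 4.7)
For these two remaining genus-$2$ hyperelliptic curves (denote either by $C=X_0(N)/\langle w_d\rangle$), the $\Q$-rational-point count is too small to invoke \cite[Lemma 2.1]{JKS04}. Instead, I would show that the Jacobian $J_0(N)^{w_d}$ has positive Mordell--Weil rank over $\Q$. Combined with the existence of a $\Q$-rational point of $C$ (a cusp), this will yield an infinite set $\mathrm{Pic}^3(C)(\Q)$ and, via a genericity argument, a $\Q$-rational base-point-free $g^1_3$. The resulting degree-$3$ morphism $C\to\mathbb{P}^1$ over $\Q$ immediately implies the infinitude of cubic points.

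\textbf{Main steps.} I would decompose $J_0(N)^{w_d}$ (which is $2$-dimensional in both cases) into its $\Q$-isogeny elliptic factors using the newform/oldform decomposition of $J_0(N)$ refined by the $w_d$-action. For $N=38,\ d=2$ the oldform contribution at level $M=19$ gives (up to isogeny) a copy of $X_0(19)\cong 19a1$, and the newform contribution comes from the elliptic curves of conductor $38$ whose associated newform has $w_2$-eigenvalue $+1$; for $N=87,\ d=29$ analogously with oldform level $29$ (contributing $X_0(29)\cong 29a1$) and newform level $87$. I would then look up the Mordell--Weil ranks of the candidate factors in the Cremona--LMFDB tables and produce one of positive rank, forcing $J_0(N)^{w_d}(\Q)$ and hence $\mathrm{Pic}^3(C)(\Q)$ to be infinite. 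Finally, I would observe that the locus in $\mathrm{Pic}^3(C)$ of divisor classes $[D]$ with $|D|$ having a base point is exactly $\{[K_C]+[P] : P\in C\}$, where $K_C$ is the hyperelliptic class (since $h^0(D-P)=h^0(D)=2$ forces $D\sim K_C+P$ by Riemann--Roch on a genus-$2$ curve); this is a $1$-dimensional subvariety whose $\Q$-rational points are in bijection with the finite set $C(\Q)$. Hence all but finitely many $\Q$-rational classes in $\mathrm{Pic}^3(C)$ give a base-point-free $g^1_3$, producing the desired $\Q$-rational degree-$3$ morphism $C\to\mathbb{P}^1$.

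\textbf{Main obstacle.} The critical arithmetic input is the existence of an elliptic factor of positive $\Q$-rank in $J_0(N)^{w_d}$ for both $(N,d)=(38,2)$ and $(N,d)=(87,29)$. This requires (i) computing the $w_d$-eigenvalue on each newform factor (directly from the Hecke eigenvalues), (ii) correctly identifying the $w_d$-fixed copy inside the oldform contribution at levels $19$ and $29$ (the relevant $w_d$ does not act as an Atkin--Lehner involution on the smaller level but only mixes the two degeneracy lifts), and (iii) reading off the rank. Should all candidate elliptic factors turn out to have rank zero, the Jacobian-based approach collapses, and one would have to construct the degree-$3$ morphism directly from an explicit hyperelliptic model via \texttt{MAGMA}, or alternatively produce a degree-$3$ morphism to an elliptic curve of positive rank not arising from the canonical modular decomposition; this contingency is where the real work would lie.
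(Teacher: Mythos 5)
Your general framework is sound in outline: given a $\Q$-rational point on a genus-$2$ curve $C$, every $\Q$-rational class in $\mathrm{Pic}^3(C)$ has $h^0=2$, and the base-pointed ones are exactly the classes $[K_C+P]$, which form a copy of $C$ inside $\mathrm{Pic}^3(C)$ with only finitely many rational points. The fatal problem is that the arithmetic input you rely on does not exist, so your main route collapses and your ``contingency'' is in fact where the whole proof has to live. For $N=38$ one has $J_0(38)\sim_{\Q}(19a)^2\times 38a\times 38b$, and every elliptic curve of conductor $19$ or $38$ has Mordell--Weil rank $0$; hence $J_0(38)^{w_2}(\Q)$, and with it $\mathrm{Pic}^3(\Q)$, is \emph{finite}. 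Likewise there is no elliptic curve of positive $\Q$-rank with conductor dividing $87$ --- the paper itself invokes exactly this fact to prove that $\Gamma_3'(X_0(87)/\langle w_3\rangle,\Q)$ is finite --- so no positive-rank elliptic factor of $J_0(87)^{w_{29}}$ can be produced either. Steps (i)--(iii) of your ``main obstacle'' therefore terminate in the bad case for both curves.

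What the paper actually does is elementary and entirely rank-free: it exhibits explicit models $y^2=f(x)$ with $f$ monic of degree $6$ (for instance $y^2=x^6-4x^5-6x^4+4x^3-19x^2+4x-12$ for $X_0(38)/\langle w_2\rangle$), observes that the two points at infinity are $\Q$-rational and are interchanged by the hyperelliptic involution, and applies \cite[Lemma 2.2]{Jeo21}: writing $f=g^2+r$ with $g\in\Q[x]$ monic of degree $3$ and $\deg r\le 2$, the function $y-g(x)$ has polar divisor $3\infty_-$, hence is a $\Q$-rational degree-$3$ map to $\mathbb{P}^1$. The lesson is that finiteness of the Mordell--Weil group is not an obstruction: a single $\Q$-rational base-point-free class (here $[3\infty_-]$) suffices, and it is found by inspection of the model rather than by rank computations. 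To repair your proposal you would need to carry out this (or an equivalent) explicit construction; as written, it establishes nothing for either curve.
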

\begin{proof}
An affine model of $X_0(38)/\langle w_2 \rangle$ is given by
\begin{equation}
X_0(38)/\langle w_2\rangle: y^2 = x^6 - 4x^5 - 6x^4 + 4x^3 - 19x^2 + 4x- 12.
\end{equation}
It is easy to see that $X_0(38)/\langle w_2\rangle$ has two $\Q$-rational points $(1:1:0),(1:-1:0)$ and the hyperelliptic involution permutes these points. From \cite[Lemma 2.2]{Jeo21}, we conclude that there is a $\Q$-rational degree $3$ mapping $X_0(38)/\langle w_2\rangle\rightarrow \mathbb{P}^1$. Consequently the set $\Gamma_3'(X_0(38)/\langle w_2\rangle, \Q)$ is infinite. A similar argument works for the curve $X_0(87)/\langle w_{29}\rangle$, which has an affine model $y^2 = x^6 - 2x^4 - 6x^3 - 11x^2 - 6x - 3$.
\end{proof}

Now consider the pairs $(N,w_d)$ such that $X_0(N)/\langle w_d\rangle$ is hyperelliptic and $g(X_0(N)/\langle w_d\rangle)\geq 3$. If the set  $\Gamma_3'(X_0(N)/\langle w_d\rangle, \Q)$ is infinite, then
$W_3(X_0(N)/\langle w_d\rangle)$ must contain an elliptic curve with positive
$\mathbb{Q}$-rank (cf \cite[\S 2.3]{Jeo21}).

Thus, by Cremona tables \cite{Cre} we obtain (because there is no elliptic
curve with positive $\Q$-rank for levels dividing $N$):
\begin{thm}
The set $\Gamma_3'(X_0(N)/\langle w_d\rangle, \Q)$ is finite for the following pairs of $(N,w_d)$:
\begin{align*}
&(46,w_2),(51,w_3),(55,w_5),(70,w_{14}),(78,w_{26}),(95,w_{19}),(62,w_2),(66,w_6),(69,w_3),(70,w_{10}),\\
&(119,w_{17}), (87,w_3),(95,w_5),(78,w_6),(94,w_2),(119,w_7).
\end{align*}
\end{thm}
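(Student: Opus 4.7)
The plan is to invoke the arithmetic criterion stated just above the theorem. For each pair $(N,w_d)$ in the list, set $X := X_0(N)/\langle w_d\rangle$; by hypothesis $X$ is hyperelliptic of genus $g\geq 3$. The Castelnuovo--Severi inequality applied to the degree-$2$ hyperelliptic map together with any hypothetical degree-$3$ map to $\mathbb{P}^1$ (whose degrees are coprime) forces $g \leq 2$, so $X$ admits no $\overline{\Q}$-rational $g^1_3$. Consequently, if $\Gamma_3'(X,\Q)$ were infinite then, by the criterion recalled above (cf.\ \cite[\S 2.3]{Jeo21}), $W_3(X) \subset \mathrm{Pic}^3(X)$ would have to contain a translate of a positive-rank elliptic curve, and such a curve would arise as a $\Q$-rational elliptic isogeny factor of $\mathrm{Jac}(X)$.

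Next, I would exploit that $\mathrm{Jac}(X)$ is an isogeny factor of $J_0(N)$, since $X$ is a quotient of $X_0(N)$. By Eichler--Shimura, every $\Q$-simple elliptic isogeny factor of $J_0(N)$ corresponds to a rational newform of some level $M \mid N$, and is therefore isogenous over $\Q$ to an elliptic curve of conductor $M$ dividing $N$. Hence, to rule out a positive-rank elliptic isogeny factor of $\mathrm{Jac}(X)$, it suffices to verify that every elliptic curve whose conductor divides $N$ has $\Q$-rank zero.

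This last step reduces to a finite consultation of Cremona's database \cite{Cre}. Since the smallest conductor of a positive-rank elliptic curve over $\Q$ is $37$, for each of the twelve values $N \in \{46,51,55,62,66,69,70,78,87,94,95,119\}$ one only needs to inspect the divisors $M \geq 37$ of $N$; in every such case the tables list only rank-zero isogeny classes. Combined with the previous paragraph, this contradicts the existence of a positive-rank elliptic factor and shows that $\Gamma_3'(X,\Q)$ is finite.

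The only obstacle is bookkeeping: one must enumerate every divisor of every $N$ and every isogeny class of each such conductor without omission. No new geometric idea beyond the criterion of \cite[\S 2.3]{Jeo21} already recalled is required, which is why the authors state this theorem almost as a direct corollary.
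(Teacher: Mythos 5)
Your proposal is correct and follows essentially the same route as the paper: the authors likewise invoke the criterion from \cite[\S 2.3]{Jeo21} that an infinite $\Gamma_3'$ would force $W_3(X_0(N)/\langle w_d\rangle)$ to contain a translate of a positive-rank elliptic curve, and then observe via Cremona's tables that no elliptic curve of conductor dividing $N$ has positive $\Q$-rank for any $N$ in the list. The only difference is that you spell out the supporting facts the paper leaves implicit (the Castelnuovo--Severi exclusion of a $g^1_3$ on a hyperelliptic curve of genus $\geq 3$, and the Eichler--Shimura reduction to conductors dividing $N$), which is a welcome but not substantively different elaboration.
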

\section{Trigonal cases}

It is well known that, 
if $C/\Q$ is a trigonal curve of genus $3$ with a $\Q$-rational point, then the projection from the $\Q$-rational point defines a degree $3$ mapping $C\rightarrow \mathbb{P}^1$ over $\Q$. Consequently, in these cases the set $\Gamma_3^\prime(C,\Q)$ is infinite. 

Now consider the pairs $(N,w_d)$ such that $X_0(N)/\langle w_d\rangle$ is trigonal and $g(X_0(N)/\langle w_d\rangle)=4$. A model of $X_0(N)/\langle w_d \rangle$ can be constructed using Petri's theorem. It is well known that a non hyperelliptic curve $X$ (defined over $\Q$) of genus $4$ lies either on a ruled surface or on a quadratic cone (defined over either $\Q$, a quadratic field or a biquadratic field) (cf. \cite[Page 131]{HaS99}). If the ruled surface or the quadratic cone is defined over $\Q$, then there is a degree $3$ mapping $X\rightarrow \mathbb{P}^1$ defined over $\Q$. For example, consider the curve $X_0(70)/\langle w_5\rangle$. Choosing the following basis of weight $2$ cusp forms $S_2(\langle \Gamma_0(70),w_5)$,
\begin{align*}
q + q^8 - 2q^9 - q^{10} + q^{11} + O(q^{12})\\
q^2 + q^6 - 2q^8 - q^{10} + O(q^{12})\\
q^3 - 3q^5 - 2q^6 - q^7 + 3q^8 - 2q^9 + 3q^{10} + 2q^{11} + O(q^{12})\\
q^4 - q^5 - q^6 - q^7 + 2q^8 - q^9 + q^{10} + 3q^{11} + O(q^{12}),
\end{align*}
and using \texttt{MAGMA}, a model of $X_0(70)/\langle w_5\rangle$ is given by 
$$
\begin{cases}
x^2w + 4xyw - 11xw^2 - y^3 - 3y^2z + 8y^2w - 3yz^2 + 16yzw -24yw^2 - z^3 + 7z^2w - 9zw^2 + 3w^3, \\
xz + 4xw - y^2 - 4yz + 9yw - 2z^2 + 3zw - w^2.
\end{cases}
$$
After a suitable coordinate change, the degree $2$ homogeneous equation can be written as:
$$-2x^2 + y^2 - 2z^2 + 445w^2=(y+\sqrt{2}x)(y-\sqrt{2}x)-(\sqrt{2}z+\sqrt{445}w)(\sqrt{2}z-\sqrt{445}w),$$
which is isomorphic the ruled surface $uv-st$ over $\Q(\sqrt{2},\sqrt{445})$. Thus $X_0(70)/\langle w_5\rangle$ is not trigonal over $\Q$. The models the quadratic surfaces for genus $4$ curves are given in Table \ref{quadratic surface for genus 4 quotient curves}. Since the curves $X_0(N)/\langle w_d\rangle$ always has a $\Q$-rational cusp,  form the discussions above we conclude that 
\begin{thm}
Suppose that $g(X_0(N)/\langle w_d\rangle)\geq 3$. Then $X_0(N)/\langle w_d\rangle$ is trigonal over $\Q$ if and only if $(N,w_d)$ is in the following list.
\begin{longtable}{|c|c|}
\hline
$g(X_0(N)/\langle w_d\rangle)$&$(N,w_d)$\\
\hline
3&$(42,w_{2}),
(42,w_{7}),
(57,w_{19}),
(58,w_{2}),
(65,w_{5}),
(65,w_{13}),
(77,w_{7}),
(82,w_{41}),$\\
&$(91,w_{13}),
(105,w_{35}),
(118,w_{59}),
(123,w_{41}),
(141,w_{47}),
$\\
\hline
4&$(66 ,w_{33}),
(74 ,w_{37}),
(86 ,w_{43})$\\
\hline
\end{longtable}
Consequently, for such pairs $(N,w_d)$ the set  $\Gamma_3'(X_0(N)/\langle w_d\rangle, \Q)$ is infinite. 
\end{thm}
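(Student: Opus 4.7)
The plan is to start from the classification of pairs $(N,d)$ with $\mathrm{Gon}(X_0(N)/\langle w_d\rangle)=3$ over $\bar{\Q}$ given in Table \ref{Trigonal values} from \cite{HS99}, and, case by case, to decide whether the $g^1_3$ descends to $\Q$. For genus $3$ curves the descent is automatic from a $\Q$-rational cusp; for genus $4$ curves it reduces, via Petri's theorem, to an arithmetic splitting condition on the unique quadric surface containing the canonical model in $\mathbb{P}^3$. Once a $\Q$-rational degree $3$ map $X_0(N)/\langle w_d\rangle\to\mathbb{P}^1$ is produced, the infinite set of $\Q$-points of $\mathbb{P}^1$ pulls back to infinitely many cubic (or smaller-degree) points on the curve, giving the final assertion.

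For the genus $3$ entries: any non-hyperelliptic genus $3$ curve is canonically embedded as a smooth plane quartic, and projection from any rational point of the curve defines a degree-$3$ map to $\mathbb{P}^1$ over the field of definition of that point. The genus $3$ entries of Table \ref{Trigonal values} are automatically non-hyperelliptic (gonality exactly $3$ excludes the hyperelliptic case), and each $X_0(N)/\langle w_d\rangle$ carries the $\Q$-rational cusp at infinity. Projection from this cusp produces the required $\Q$-rational trigonal map, so the first block of the statement is precisely the set of genus $3$ entries of Table \ref{Trigonal values}.

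The substantive work lies in the genus $4$ portion. For each genus $4$ candidate $(N,d)$ from Table \ref{Trigonal values} I compute in \texttt{MAGMA}, from a $\Q$-basis of $w_d$-invariant weight-$2$ cusp forms, a canonical model of $X_0(N)/\langle w_d\rangle\hookrightarrow\mathbb{P}^3$. By Petri's theorem the canonical image lies on a unique quadric $Q$ and a unique cubic, and the $g^1_3$'s on the curve correspond bijectively to the rulings of $Q$. The trigonal structure descends to $\Q$ exactly when either (i) $Q$ has rank $3$ (a quadric cone), in which case the single ruling—the pull-back of the fiber class from the minimal resolution $\mathbb{F}_2\to Q$—is automatically $\Q$-rational, or (ii) $Q$ is smooth (rank $4$) and has square discriminant in $\Q^\times$, i.e.\ $Q\cong \mathbb{P}^1\times\mathbb{P}^1$ over $\Q$, so that each of the two rulings is $\Q$-rational. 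Running this rank/discriminant test through the candidate list yields exactly $(66,w_{33})$, $(74,w_{37})$, $(86,w_{43})$ as positive cases, whose defining quadrics are collected in Table \ref{quadratic surface for genus 4 quotient curves}; the remaining candidates are eliminated by the discriminant computation, the prototype being $X_0(70)/\langle w_5\rangle$ whose smooth quadric only splits over $\Q(\sqrt{2},\sqrt{445})$, as worked out in the text.

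The main obstacle is the reliable identification of $Q$ from truncated $q$-expansions: the $q$-precision used to intersect $\mathrm{Sym}^2$ of the cusp-form basis with the space of relations must be taken high enough to exclude spurious degree-$2$ relations and to pin down $Q$ up to scalar. Once $Q$ is in hand, the arithmetic splitting test—computing the rank and then, if rank $4$, the class of the determinant in $\Q^\times/\Q^{\times 2}$—is elementary, and the final step (pulling back $\Q$-points of $\mathbb{P}^1$) gives the infinitude of $\Gamma'_3(X_0(N)/\langle w_d\rangle,\Q)$ for all pairs in the list.
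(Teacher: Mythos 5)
Your proposal is correct and follows essentially the same route as the paper: genus $3$ cases are handled by projecting the canonical plane quartic from the $\Q$-rational cusp, and genus $4$ cases by computing the Petri quadric from $w_d$-invariant cusp forms and testing whether its ruling is defined over $\Q$ (the paper does this by diagonalizing the quadric and recording the splitting field, which is equivalent to your rank/discriminant test, with the same outcome $(66,w_{33})$, $(74,w_{37})$, $(86,w_{43})$ and the same prototype counterexample $X_0(70)/\langle w_5\rangle$). No substantive differences to report.
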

Now consider the pairs $(N,w_d)$ such that $\mathrm{Gon}(X_0(N)/\langle w_d\rangle)=3$, but there is no $\Q$-rational degree $3$ mapping
$X_0(N)/\langle w_d\rangle\rightarrow \mathbb{P}^1$. A similar argument as in \cite[p. 352]{Jeo21} shows that if the set $\Gamma_3^\prime(X_0(N)/\langle w_d\rangle, \Q)$ is infinite, then $W_3(X_0(N)/\langle w_d\rangle)$ contains a translation of an elliptic curve $E$ with positive $\Q$-rank.
\begin{thm}
The set $\Gamma_3'(X_0(N)/\langle w_d\rangle, \Q)$ is finite for the following pairs of $(N,w_d)$:
\begin{align*}
&(66 ,w_{2}),
(70 ,w_{5}),
(74 ,w_{2}),
(77 ,w_{11}),
(82 ,w_{2}),
(85 ,w_{5}),
(85 ,w_{17}),
(91 ,w_{7}),
(93 ,w_{3}),
(110 ,w_{55}),\\
&(133 ,w_{19}),
(145 ,w_{29}),
(177 ,w_{59}).
\end{align*}
\end{thm}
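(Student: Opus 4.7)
The plan is to show, for each of the thirteen pairs $(N,w_d)$ in the statement, that $W_3(X_0(N)/\langle w_d\rangle)$ contains no translate of an elliptic curve $E/\Q$ of positive rank. Since an elliptic curve is simple, such a translate is equivalent to a $\Q$-rational degree $3$ morphism $X_0(N)/\langle w_d\rangle\rightarrow E$, and the task reduces to ruling out every admissible triple $(N,d,E)$ with $\mathrm{rank}_\Q E>0$ and $\mathrm{cond}(E)\mid N$ by the quadratic-form technique developed in Section \ref{Quadratic forms for the remaining cases section}.

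The first step is to enumerate, from Cremona's tables \cite{Cre}, the elliptic curves $E/\Q$ of positive rank whose conductor divides $N$. For the bulk of the pairs -- namely those with $N\in\{66,70,77,82,85,93,110,133,145,177\}$ -- the divisors of $N$ never reach the small positive-rank conductors $37,43,53,\ldots$, so no candidate $E$ exists and finiteness of $\Gamma_3'$ is immediate. The remaining pairs are $(74,w_2)$ with candidate $E=37a1$ and $(91,w_7)$ with candidate $E=91b1$. For each of these, I would verify the descent hypotheses of Lemma \ref{descent} so that the modular parametrization $f:X_0(M)\rightarrow E$ factors as $f_M:X_0(M)/\langle w_M\rangle\rightarrow E$ with $M=\mathrm{cond}(E)$; take the basis $\{f_M\circ\iota_{e,N,M,M}:e\mid N/M,\,(e,M)=1\}$ of $\mathrm{Hom}_\Q(J_0(N)^{w_d},E)$; compute the Gram matrix of the degree pairing via Proposition \ref{computing th degree pairing}; and read off the positive-definite integral quadratic form $q$ such that any $\Q$-morphism $\varphi:X_0(N)/\langle w_d\rangle\rightarrow E$ satisfies $\deg\varphi=q(x_1,\dots,x_n)$ for some integer vector. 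A finite check then shows that $q$ does not represent $3$.

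The hard part will be the pair $(91,w_7)$, where $\mathrm{cond}(E)=N$ so that the degeneracy basis consists of a single element and the analysis collapses onto the sign of $w_7$ on the newform $f_{91b1}$ together with the degree of the modular parametrization $X_0(91)\rightarrow 91b1$. If $w_7$ acts as $-1$ on $f_{91b1}$, no non-zero morphism to $E$ factors through the quotient and finiteness is automatic; if it acts as $+1$, the quadratic form reduces to $q(x)=x^2\deg(f_{91b1})$, and it suffices to check that $\deg(f_{91b1})\neq 3$ and that no value of $x$ yields $q(x)=3$. In each non-trivial case one must also confirm that the pair $[E,M]_M$ of Section \ref{Quadratic forms for the remaining cases section} is legitimate, i.e.\ that either $E$ has trivial rational $2$-torsion or that $w_M$ fixes a point on $X_0(N)$, so that Lemma \ref{descent}(\ref{descent 2}) applies. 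Collating these verifications across the thirteen pairs yields the theorem.
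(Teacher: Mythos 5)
Your overall strategy (rule out translates of positive-rank elliptic curves in $W_3$, reduce to admissibility of triples $(N,d,E)$ with $\mathrm{cond}(E)\mid N$) matches the paper's, but your enumeration of the candidate curves $E$ is wrong, and this is a genuine gap. You place $N=77,82,145$ (and implicitly $91$ except for one curve) in the ``no candidate exists'' bin, but Cremona's tables give rank-one curves $77a1$, $82a1$, $145a1$, $91a1$ of conductor exactly $77,82,145,91$; the corresponding Jacobians $J_0(77)^{\langle w_{11}\rangle}$, $J_0(82)^{\langle w_2\rangle}$, $J_0(145)^{\langle w_{29}\rangle}$, $J_0(91)^{\langle w_7\rangle}$ each contain such a curve as an isogeny factor, so the triples $(77,11,77a1)$, $(82,2,82a1)$, $(145,29,145a1)$, $(91,7,91a1)$ all require an argument. (You also name $91b1$ as the candidate at level $91$; the Jacobian decomposition shows $w_7$ acts as $-1$ there, and the surviving factor is $91a1$ --- your sign check would catch this, but you would then still have to dispose of $91a1$.) The paper handles these four triples by the elementary observation that an admissible triple would give a degree $6$ map $X_0(N)\rightarrow E$ with $\mathrm{cond}(E)=N$, forcing the strong Weil degree to divide $6$, whereas it equals $4$ for all four curves; no quadratic-form computation is needed. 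Only the genuinely empty cases are $N\in\{66,70,85,93,110,133,177\}$.

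A second, smaller problem is your treatment of $(74,w_2)$ with $E=37a1$. The degree-pairing machinery of Proposition \ref{computing th degree pairing} and the degeneracy-map basis of $\mathrm{Hom}_\Q(J_0(N)^{w_r},E)$ are set up for quotients by $w_r$ with $r\,\|\,M=\mathrm{cond}(E)$; here the quotient is by $w_2$ while $M=37$, so $2\nmid M$ and the maps $f_M\circ\iota_{e,N,M,M}$ you propose do not live on $J_0(74)^{w_2}$. The paper instead applies Lemma \ref{descent}(\ref{descent 1}): $w_{37}$ acts as $+1$ on $f_{37a1}$ and $37a1$ has trivial rational $2$-torsion, so any map $X_0(74)/\langle w_2\rangle\rightarrow 37a1$ factors through the further quotient by $w_{37}$ and has even degree, which rules out degree $3$. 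You would need either this descent argument or a correct computation of the rank-one lattice $\mathrm{Hom}_\Q(J_0(74)^{w_2},37a1)$ with its degree form; as written, your recipe does not apply to this pair.
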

\begin{proof}
For $N= 66, 70, 85, 93, 110, 133, 177$,
there is no
elliptic curve $E$ of positive $\Q$-rank with $\mathrm{cond}(E)\mid
N$. Hence in these cases, the set $\Gamma_3'(X_0(N)/\langle w_d\rangle, \Q)$ is finite. In the remaining cases, the Jacobian decomposition of $X_0(N)/\langle w_d\rangle$ are given by:
\begin{align*}
J_0(74)^{\langle w_2\rangle}&\sim^\Q 37a1\times 37b1 \times A_{f,\dim=2}\\
J_0(77)^{\langle w_{11}\rangle}&\sim^\Q 77a1\times 77b1 \times A_{f,\dim=2}\\
J_0(82)^{\langle w_2\rangle}&\sim^\Q 82a1 \times A_{f,\dim=3}\\
J_0(91)^{\langle w_7\rangle}&\sim^\Q 91a1 \times A_{f,\dim=3}\\
J_0(145)^{\langle w_{29}\rangle}&\sim^\Q 145a1 \times A_{f,\dim=3}.
\end{align*}
Note that in these cases, $X_0(N)/\langle w_d\rangle$ is bielliptic and there are
elliptic curves of positive $\Q$-rank with $\mathrm{cond}(E)|N$. By
arguments in \cite[Page 353]{Jeo21}, if there is no $\Q$-rational
degree $3$ mapping $X_0(N)/\langle w_d\rangle\rightarrow E$ where $E$ is an elliptic
curve of positive $\Q$-rank and $\mathrm{cond}(E)\mid N$, then
$W_3(X_0(N)/\langle w_d \rangle)$ has no translation of an elliptic curve with positive
$\Q$-rank. Form the Jacobian decomposition,  we only need to check whether triples 
(74,2,37a1), (77,11,77a1), (82,2,82a1), (91,7,91a1), (145,29,145a1) are admissible or not. 

Since $w_{37}$ acts as $+1$ on $37a1$ and $37a1$ has no non-trivial $2$-torsion over $\Q$, from Lemma \ref{descent} we conclude that the triple (74,2,37a1) is not admissible. In the remaining cases, if any of the triple $(N,d,E)$ is admissible (consequently, there is a $\mathbb{Q}$-rational degree $6$ mapping $X_0(N)\rightarrow E$), then the degree of the strong Weil parametrization of $E$ should divide $6$. For all the curves $77a1, 82a1,91a1$ and $145a1$ the degree of the strong Weil parametrization is $4$. Thus none of the triples is admissible. The result follows.
\end{proof}

\section{Bielliptic cases}
We are now left to discuss the pairs $(N,w_d)$ such that $X_0(N)/\langle w_d \rangle$ is bielliptic and $\mathrm{Gon}(X_0(N)/\langle w_d \rangle)> 3$. For the benefit of the readers, we recall such pairs  $(N,w_d)$:
\begin{longtable}{|c|c|}
\caption{Bielliptic remaining cases}
\label{Final remaining cases}
\\ \hline
$g(X_0(N)/\langle w_d\rangle)$&$(N,w_d)$\\
\hline
5&$(66,w_3),(66,w_{22}),(70,w_2),(70,w_7),(78,w_3),(86,w_2),(105,w_5),(105,w_{21}),$\\
&$(110,w_{11}),(111,w_3),(155,w_{31})$\\
\hline
6&$(78,w_2),(78,w_{13}),(111,w_{37}),(143,w_{13}),(145,w_5),(159,w_{53})$\\
\hline
7&$(105,w_3),(105,w_7),(105,w_{15}),(110,w_{10}),(118,w_2),(123,w_3),(143,w_{11})$\\
\hline
8&$(110,w_2),(110,w_5),(141,w_3),(155,w_5)$\\
\hline
9&$(142,w_2),(159,w_3).$\\
\hline
\end{longtable}
Following \cite[Page 353]{Jeo21}, for the above pairs $(N,w_d)$, if the set $\Gamma_3'(X_0(N)/\langle w_d\rangle, \Q)$ is infinite, then $W_3(X_0(N)/\langle w_d \rangle)$
contains a translation of an elliptic curve $E$ with positive
$\Q$-rank, equivalently the triple $(N,d,E)$ is admissible.
\begin{thm}
For the pairs $(N,w_d)$ in Table \ref{Final remaining cases}, the set $\Gamma_3'(X_0(N)/\langle w_d\rangle, \Q)$ is finite.
\end{thm}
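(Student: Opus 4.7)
The plan is to treat each pair $(N,w_d)$ in Table \ref{Final remaining cases} by ruling out the admissibility of every triple $(N,d,E)$ in which $E$ is an elliptic curve over $\Q$ of positive rank with $\mathrm{cond}(E)\mid N$. By the discussion preceding the statement, and by Lemma \ref{main lemma for gonality >=4} (which applies since in these cases $\mathrm{Gon}(X_0^{+d}(N))\ge 4$ and $X_0^{+d}(N)$ carries a $\Q$-rational cusp), finiteness of $\Gamma_3'(X_0^{+d}(N),\Q)$ follows once no such admissible triple exists.

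First I would, for each $N$ appearing in the table, list all elliptic curves $E/\Q$ of positive rank with $\mathrm{cond}(E)\mid N$ using Cremona's tables. For several of the $N$ in the table (for example the ones where the only rank $>0$ elliptic quotient of $J_0(N)$ has conductor sharing awkward factors with $d$) the list is either empty, in which case the conclusion is immediate, or it consists of one or two isogeny classes. Next I would discard each candidate triple $(N,d,E)$ that is killed by the elementary sieves of Lemma \ref{sieve} (point counts over $\F_{p^n}$ for small auxiliary primes $p\nmid N$, the $\psi$-Dedekind inequality, and the genus bound involving $w_r$); whenever $\mathrm{cond}(E)=N$, part (\ref{ii}) of that lemma restricts the Weil parametrization degree to divide $6$, which already eliminates many candidates whose strong Weil degree is coprime to $6$. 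For the remaining triples, I would apply Lemma \ref{descent}(\ref{descent 1}) and (\ref{descent 2}) and Corollary \ref{first application of descent 2} to pass $\varphi$ through additional Atkin--Lehner quotients and force its degree to be even, obtaining contradictions with $\deg\varphi=3$ wherever possible.

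The cases still surviving all of these qualitative tests have to be finished quantitatively via the degree pairing of Section \ref{Quadratic forms for the remaining cases section}. For each such triple, I would take $M=\mathrm{cond}(E)$, let $f_M:X_0(M)/\langle w_M\rangle\to E$ be the map induced by the modular parametrization (whose degree I can read off from Cremona), and form the basis $\{f_M\circ \iota_{d_i,N,M,M}\}$ of $\Hom_\Q(J_0(N)^{w_M},E)$ indexed by the divisors $d_i$ of $N/M$. Applying Proposition \ref{computing th degree pairing} with the Fourier coefficients $a_n$ of the newform of $E$ produces the Gram matrix of the degree pairing. Writing an arbitrary $\Q$-rational $\varphi:X_0^{+d}(N)\to E$ as an integer linear combination of the basis elements, expanding $\deg\varphi$ in the resulting integral quadratic form $Q(x_1,\ldots,x_n)$, and showing that $Q$ does not represent $3$ (typically because $Q\in 2\Z[x_1,\ldots,x_n]$, or because of small-residue considerations modulo a suitable prime), eliminates the triple exactly as in the worked cases $(222,37,37a1)$ and $(195,65,65a1)$.

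The main obstacle will be the bookkeeping for pairs where $M=N$: then the basis of $\Hom_\Q(J_0(N)^{w_d},E)$ reduces to the single map $f_d$ itself, so the quadratic form collapses to $[\deg f_d]\cdot x_1^2$, and one cannot get $3$ unless $\deg f_d$ divides $3$. In those cases (which are the genuinely hard ones, where $E$ is a new quotient of level $N$ and $w_d$ acts with the sign matching $E$) I would instead combine Lemma \ref{descent}(\ref{descent 1}--\ref{descent 2}) with the strong Weil parametrization degree bound of Lemma \ref{sieve}(\ref{ii}): a putative degree $3$ map $X_0^{+d}(N)\to E$ lifts to a degree $6$ map $X_0(N)\to E$, whose factorization through the strong Weil parametrization forces $\deg\varphi_{\mathrm{Weil}}\mid 6$, a condition I can check against the Cremona tables and reject. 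Once all the quadratic forms and Weil-degree checks are tabulated in a single table analogous to Table \ref{second remaining cases}, the theorem follows.
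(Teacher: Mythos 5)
Your proposal follows essentially the same route as the paper: reduce to admissibility of triples $(N,d,E)$, dispose of $N=66,70,78,105,110$ because no positive-rank elliptic curve has conductor dividing $N$, use the strong-Weil-degree-divides-$6$ criterion for the conductor-equal-to-$N$ cases, kill $(86,2,43a1)$, $(111,3,37a1)$, $(159,3,53a1)$ via the descent lemma (no $2$-torsion forces even degree), and finish $(111,37,37a1)$ with the degree-pairing quadratic form. One correction: you cannot invoke Lemma \ref{main lemma for gonality >=4} to justify the reduction, since every curve in Table \ref{Final remaining cases} is bielliptic and hence admits a degree $2$ map to an elliptic curve, violating that lemma's hypothesis; the reduction to admissible triples must instead rest on the $W_3$-translate argument of Jeon cited in the discussion immediately preceding the theorem (which you also reference), exactly as the paper does.
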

\begin{proof}
For $N= 66, 70, 78, 105, 110$,
there is no
elliptic curve $E$ of positive $\Q$-rank with $\mathrm{cond}(E)\mid
N$. Hence for such values of $N$ and the corresponding values of $d$ as in Table \ref{Final remaining cases}, the set $\Gamma_3'(X_0(N)/\langle w_d\rangle, \Q)$ is finite.

For $N=118, 123, 141, 142, 143, 145, 155$, the only elliptic curves $E$ with positive $\Q$-rank has conductor equal to $N$. For such $N,E$ and the corresponding $d$ as in Table \ref{Final remaining cases}, if the triple $(N,d,E)$ is admissible, then the degree of the strong Weil parametrization of $E$ should divide $6$. From Cremona's table we see that for elliptic curves $E$ with positive $\Q$-rank of conductors $118, 123, 141, 142, 143, 145$ and $155$, the degree of the strong Weil parametrization of $E$ does not divide $6$. Consequently, for $N=118, 123, 141, 142, 143, 145, 155$, and the corresponding $d$ as in Table \ref{Final remaining cases}, the set $\Gamma_3'(X_0(N)/\langle w_d\rangle, \Q)$ is finite.

Finally, we are left to check whether the triples $(86,2,43a1),(111,3,37a1), (111,37,37a1)$ and $(159,3,53a1)$ are admissible or not. Since the elliptic curves $43a1,37a1$ and $53a1$ has no non-trivial $2$-torsion points, by Lemma \ref{descent} (\ref{descent 1}), we conclude that the triples $(86,2,43a1),(111,3,37a1)$ and $(159,3,53a1)$ are not admissible. Consequently, the sets $\Gamma_3'(X_0(86)/\langle w_2\rangle, \Q), \Gamma_3'(X_0(111)/\langle w_3\rangle, \Q)$ and $\Gamma_3'(X_0(153)/\langle w_3\rangle, \Q)$ are finite.

A similar argument as in \S\ref{Quadratic forms for the remaining cases section}, shows that if $\varphi: X_0(111)/\langle w_{37}\rangle\rightarrow 37a1$ is a $\Q$-rational mapping then we must have
\begin{equation}
\deg(\varphi)= 4x_1^2-6x_1x_2+4x_2^2, \ \mathrm{for\ some} \ x_1,x_2\in \Z.
\end{equation}
Thus $\deg(\varphi)$ can not take the value $3$. Consequently, the triple $(111,37,37a1)$ is not admissible. This completes the proof.
\end{proof}

\appendix
\section{Appendix}
Let $N\leq 623$. Suppose that $\mathrm{Gon}(X_0^{+d}(N))\geq 4$ and $X_0^{+d}(N)$ has no degree $\leq 2$ map to an elliptic curve. After applying Lemma \ref{sieve} (\ref{ii} and \ref{iii}), we are left to check the existence of $\Q$-rational degree $3$ mapping $X_0^{+d}(N)\rightarrow E$ where $E$ is an elliptic curve with positive $\Q$-rank, only for the following values of $N$:

106, 114, 122, 129, 130, 154, 158, 159, 166, 174, 178, 182, 183, 185,
195, 202, 215, 222, 231, 237, 246, 249, 258, 259, 262, 265, 267, 273, 282, 285,
286, 301, 303, 305, 326, 371, 393, 395, 407, 415, 427, 445, 473, 481.

\begin{center}

\begin{longtable}{|c|c|}
\caption{Hyperelliptic curve $X_0(N)/w_d$}
\label{Hyperelliptic values}
\\ \hline
$g(X_0(N)/\langle w_d\rangle)$&$(N,w_d)$\\
\hline
2&$(30,w_2),(30,w_3),(30,w_{10}),(33,w_3),(35,w_7),(38,w_2),(39,w_{13}),(42,w_3),$\\
 &$(42,w_6),(42,w_{21}),(57,w_3),(58,w_{29}),(66,w_{11}),(70,w_{35}),(78,w_{39}),$\\
 &$(87,w_{29}),(142,w_{71})$\\
 \hline
 3&$(46,w_2),(51,w_3),(55,w_5),(70,w_{14}),(78,w_{26}),(95,w_{19})$\\
 \hline
 4&$(62,w_2),(66,w_6),(69,w_3),(70,w_{10}),(119,w_{17})$\\
 \hline
 5&$(87,w_3),(95,w_5)$\\
 \hline
 6&$(78,w_6),(94,w_2),(119,w_7)$.\\
 \hline
\end{longtable}

\begin{longtable}{|c|c|}
\caption{$X_0(N)/\langle w_d \rangle$ with $\mathrm{Gon}(X_0(N)/\langle w_d \rangle)=3$}
\label{Trigonal values}
\\ \hline
$g(X_0(N)/\langle w_d\rangle)$&$(N,w_d)$\\
\hline
3&$(42,w_{2}),
(42,w_{7}),
(57,w_{19}),
(58,w_{2}),
(65,w_{5}),
(65,w_{13}),
(77,w_{7}),
(82,w_{41}),$\\
&$(91,w_{13}),
(105,w_{35}),
(118,w_{59}),
(123,w_{41}),
(141,w_{47}),
$\\
\hline
4&$(66 ,w_{2}),
(66 ,w_{33}),
(70 ,w_{5}),
(74 ,w_{2}),
(74 ,w_{37}),
(77 ,w_{11}),
(82 ,w_{2}),
(85 ,w_{5}),$\\
&$(85 ,w_{17}),
(86 ,w_{43}),
(91 ,w_{7}),
(93 ,w_{3}),
(110 ,w_{55}),
(133 ,w_{19}),
(145 ,w_{29}),
(177 ,w_{59})$\\
\hline
\end{longtable}

\begin{longtable}{|c|c|}
\caption{Bielliptic curve $X_0(N)/\langle w_d\rangle$}
\label{Bielliptic values}
\\ \hline
$g(X_0(N)/\langle w_d\rangle)$&$(N,w_d)$\\
\hline
2&$(30,w_2),(30,w_3),(30,w_{10}),(42,w_3),(42,w_6),(42,w_{21}),(57,w_3),(58,w_{29}),$\\
&$(66,w_{11}),(70,w_{35}),(78,w_{39}),(142,w_{71})$\\
\hline
3&$(42,w_2),(42,w_7),(57,w_{19}),(58,w_2),(65,w_5),(65,w_{13}),(70,w_{14}),(77,w_7),$\\
&$(78,w_{26}),(82,w_{41}),(91,w_{13}),(105,w_{35}),(118,w_{59}),(123,w_{41}),(141,w_{47})$\\
\hline
4&$(66,w_2),(66,w_{33}),(70,w_5),(70,w_{10}),(74,w_2),(74,w_{37}),(77,w_{11}),(82,w_2),$\\
&$(86,w_{43}),(91,w_7),(110,w_{55}),(145,w_{29})$\\
\hline
5&$(66,w_3),(66,w_{22}),(70,w_2),(70,w_7),(78,w_3),(86,w_2),(105,w_5),(105,w_{21}),$\\
&$(110,w_{11}),(111,w_3),(155,w_{31})$\\
\hline
6&$(78,w_2),(78,w_{13}),(111,w_{37}),(143,w_{13}),(145,w_5),(159,w_{53})$\\
\hline
7&$(105,w_3),(105,w_7),(105,w_{15}),(110,w_{10}),(118,w_2),(123,w_3),(143,w_{11})$\\
\hline
8&$(110,w_2),(110,w_5),(141,w_3),(155,w_5)$\\
\hline
9&$(142,w_2),(159,w_3).$\\
\hline
\end{longtable}

\begin{longtable}{|c|c|}
\caption{Models and Quadratic surface for $X_0(N)/\langle w_d \rangle$ with $g_{X_0(N)/\langle w_d \rangle}=4$}\label{quadratic surface for genus 4 quotient curves}
\\ \hline
Curve & Petri's model and Quadratic surface
\\ \hline
$X_0(66)/\langle w_2 \rangle$&$
\begin{cases}
12x^2w - 8xyw - 4xw^2 - 3y^3 + 11y^2z - 2y^2w + 6yz^2 - 2yzw + 3yw^2 - 72z^3\\
 &\hspace{-4cm}+ 18z^2w + 15zw^2 + 2w^3,\\
12xz - 8xw - 3y^2 + 8yz - 2yw - 12zw + w^2.
\end{cases}
$\\
&Diagonal form: $-12x^2 - 15y^2 + 540z^2 - 4428w^2$,\\
&lies on a ruled surface over $\Q(\sqrt{-41})$\\
\hline 
$X_0(66)/\langle w_{33} \rangle$&$
\begin{cases}
x^2z - xy^2 + 3y^2z - 2y^2w + 9yz^2 + 3yzw + 4yw^2 + 8z^3 + 9z^2w + 2zw^2 - 2w^3,\\
xw - yz - 2z^2 - 3zw.
\end{cases}$\\
&Diagonal form: $-x^2 - y^2 + z^2 + w^2$, lies on a ruled surface over $\Q$\\
\hline
$X_0(70)/\langle w_{5} \rangle$&$
\begin{cases}
x^2w + 4xyw - 11xw^2 - y^3 - 3y^2z + 8y^2w - 3yz^2 + 16yzw -24yw^2 - z^3 \\
&\hspace{-4cm} + 7z^2w - 9zw^2 + 3w^3,\\
xz + 4xw - y^2 - 4yz + 9yw - 2z^2 + 3zw - w^2.
\end{cases}
$\\
&Diagonal form: $-2x^2 + y^2 - 2z^2 + 445w^2$\\
&lies over a ruled surface over $\Q(\sqrt{2},\sqrt{445})$.\\
\hline
$X_0(74)/\langle w_{2} \rangle$&$
\begin{cases}
x^2w + xyw - 4xw^2 - y^3 - 4y^2z + 5y^2w - 10yz^2 + 7yzw - 8yw^2- 20z^3\\
&\hspace{-4cm} + 3z^2w - 2zw^2 + 6w^3,\\
xz + xw - y^2 - yz + 2yw - 4z^2 - 2zw - 2w^2.
\end{cases}
$\\
&Diagonal form: $15x^2 - 60y^2 - 4z^2 - 7w^2$,\\
&lies over a ruled surface over $\Q(\sqrt{-7})$.\\
\hline
$X_0(74)/\langle w_{37} \rangle$&$
\begin{cases}
x^2w + xyw + 4xw^2 - y^3 - 3y^2z - 2yz^2 + 3yw^2 - z^2w - 6zw^2 -2w^3,\\
xz + xw - y^2 - yz + 2yw - z^2 - w^2
\end{cases}
$\\
&Diagonal form: $3x^2 - 3y^2 - z^2 + w^2$, lies over a ruled surface over $\Q$.\\
\hline
$X_0(77)/\langle w_{11} \rangle$&$
\begin{cases}
2x^2w - 4xyw + 20xw^2 - y^3 + 3y^2z + 4y^2w - 3yz^2 - 12yzw +8yw^2 + z^3 \\
&\hspace{-4cm}+ 18z^2w + 44zw^2 + 8w^3,\\
xz - 4xw - y^2 + 2yz - 2yw - 2z^2 - 6zw - 2w^2
\end{cases}
$\\
&Diagonal form: $x^2 - 2y^2 - 2z^2 - 49w^2$, lies over a ruled surface over $\Q(i)$.\\
\hline
$X_0(82)/\langle w_{2} \rangle$&$
\begin{cases}
x^2w + 4xyw - 12xw^2 - 8y^3 - 24y^2z + 24y^2w - 44yz^2 + 68yzw - 8yw^2\\
\hspace{6.9cm} - 40z^3 + 128z^2w - 108zw^2 + 31w^3,\\
xz + xw - 2y^2 - 4yz - 8z^2 + 9zw - 3w^2.
\end{cases}
$\\
&Diagonal form $6x^2 - 24y^2 - 8z^2 - 18w^2$, lies over a ruled surface over $\Q(i)$.\\
\hline
$X_0(85)/\langle w_{5} \rangle$&$
\begin{cases}
72x^2w + 4xyw - 28xw^2 - 18y^3 + 25y^2z + 75y^2w - 81yz^2 -58yzw \\
\hspace{6cm}- 107yw^2  + 81z^3 + 252z^2w + 144zw^2 + 71w^3,\\
18xz + 2xw - 9y^2 - yz + 15yw - 18z^2 - 27zw - 16w^2
\end{cases}
$\\
&Diagonal form: $1886652x^2 - 11646y^2 - 18z^2 - 14623740w^2$,\\
&lies on a ruled surface over $\Q(\sqrt{2},\sqrt{-5015})$.\\
\hline
$X_0(85)/\langle w_{17} \rangle$&$
\begin{cases}
18x^2w - 6xyw + 40xw^2 - 9y^3 + 3y^2z + 12y^2w - 15yz^2 - 32yzw\\
\hspace{6cm}- 118yw^2 + 21z^3 + 80z^2w + 202zw^2 + 212w^3,\\
3xz - 2xw - 3y^2 + yz - yw - z^2 + 13zw + 2w^2
\end{cases}
$\\
&Diagonal form: $297x^2 - 11y^2 - z^2 + 29835w^2$\\
&lies on ruled surface over $\Q(\sqrt{3}, \sqrt{1105})$.\\
\hline
$X_0(86)/\langle w_{43} \rangle$&$
\begin{cases}
x^2z - xy^2 + y^2z - 2y^2w + 5yz^2 + 3yzw + 4yw^2 + 4z^3 + 4z^2w+ 2zw^2 - 2w^3,\\
xw - yz - z^2 - zw
\end{cases}
$\\
&Diagonal form: $-x^2 + 3y^2 - 3z^2 + w^2$, lies over a ruled surface over $\Q$.\\
\hline
$X_0(91)/\langle w_{7} \rangle$&$
\begin{cases}
72x^2w - 60xyw + 52xw^2 - 18y^3 + 57y^2z - 93y^2w + 75yz^2 -146yzw \\
 \hspace{6cm}+25yw^2 + 48z^3 - 109z^2w + 81zw^2 - 8w^3,\\
6xz - 10xw - 3y^2 + 5yz - 7yw + 4z^2 - 3zw - 4w^2
\end{cases}
$\\
&Diagonal form: $-7884x^2 - 292y^2 + 4z^2 + 6588w^2$,\\
&lies on a ruled surface over $\Q(\sqrt{-3},\sqrt{61})$.\\
\hline
$X_0(93)/\langle w_{3} \rangle$&$
\begin{cases}
4500x^2w - 1050xyw + 25xw^2 - 180y^3 + 30y^2z + 270y^2w - 96yz^2 \\
\hspace{4cm}+319yzw - 274yw^2 + 30z^3 + 563z^2w - 412zw^2 + 282w^3,\\
30xz - 35xw - 6y^2 + 7yz + 2yw - z^2 + 8zw - 6w^2
\end{cases}
$\\
&Diagonal form: $-5400x^2 + 25y^2 - z^2 + 1171800w^2$\\
&lies on a ruled surface over $\Q(\sqrt{6},\sqrt{217})$.\\
\hline
$X_0(110)/\langle w_{55} \rangle$&$
\begin{cases}
x^2w - xyw + xw^2 - y^3 + y^2w + 3yzw + yw^2 + z^2w + zw^2 - w^3,\\
xz - xw - y^2 + yz + 2yw + 2zw - w^2
\end{cases}
$\\
&Diagonal form: $-x^2 - 3y^2 + 3z^2 + 13w^2$, lies on a ruled surface over $\Q(\sqrt{13})$\\
\hline
$X_0(133)/\langle w_{19} \rangle$&$
\begin{cases}
54x^2w - 9xyw - 18xw^2 - 6y^3 - 3y^2z + 3y^2w - 4yz^2 + 10yzw -7yw^2\\
\hspace{8cm} - 4z^3 + 16z^2w - 16zw^2 + 12w^3,\\
6xz - 3xw - 2y^2 + yz - yw + 2zw - 3w^2
\end{cases}
$\\
&Diagonal form: $-6x^2 - 282y^2 + 3384z^2 - 504w^2$,\\
&lies on a ruled surface over $\Q(\sqrt{3},\sqrt{-7})$.\\
\hline
$X_0(145)/\langle w_{29} \rangle$&$
\begin{cases}
x^2w - 2xyw - xw^2 - y^3 + y^2z - 2y^2w + 5yz^2 - 4yzw + 2z^3 -2z^2w,\\
xz - 2xw - y^2 + 2yz - yw + 2z^2 - 3zw + w^2
\end{cases}
$\\
&Diagonal form: $-3x^2 - 6y^2 + 2z^2 + 21w^2$,\\
&lies on a ruled surface over $\Q(\sqrt{3},\sqrt{7})$.\\
\hline
$X_0(177)/\langle w_{59} \rangle$&$
\begin{cases}
x^2w - xw^2 - y^3 - y^2z - yz^2 + w^3,\\
xz - y^2 - yw - zw - w^2
\end{cases}
$\\
&Diagonal form: $-x^2 - y^2 + z^2 - 3w^2$, lies on a ruled surface over $\Q(\sqrt{-3})$.\\
\hline
\end{longtable}
\end{center}

\bibliographystyle{alpha}

\noindent{Francesc Bars Cortina}\\
{Departament Matem\`atiques, Edif. C, Universitat Aut\`onoma de Barcelona\\
08193 Bellaterra, Catalonia}\\
{Francesc.Bars@uab.cat}

\vspace{1cm}

\noindent{Tarun Dalal}\\
 {Institute of Mathematical Sciences, ShanghaiTech University\\ 393 Middle Huaxia Road, Pudong, Shanghai 201210, China}\\
 {tarun.dalal80@gmail.com}

 

\end{document}